\newcommand{\E}{\mathbb{E}}
\newcommand{\1}{\textbf{1}}
\newcommand{\var}{{\rm Var}}
\newcommand{\Oab}{O_{ab}(e)}
\newcommand{\nab}{n_{ab}(e)}
\newcommand{\LL}{\mathbb{L}}
\newcommand{\wt}{\widetilde}
\newcommand{\Diag}{{\rm Diag}}
\renewcommand{\d}{\delta}
\newcommand{\argmax}{\mathop{\rm argmax}\limits}
\def\eps{\epsilon}
\newcommand{\floor}[1]{\lfloor{#1}\rfloor}
\def\diag{\mathop{\rm diag\,}\nolimits}
\def\ra{\rightarrow}
\def\var{\mathop{\rm var}\nolimits}
\def\l{{\lambda}}
\theoremstyle{plain}
\newtheorem{thm}{Theorem}[section]
\newtheorem{lem}[thm]{Lemma}
\begin{document}

\begin{frontmatter}

\title{Bayesian Community Detection} 
\runtitle{Bayesian Community Detection}

\begin{aug}
  \author{\fnms{S.L.}  \snm{van der Pas}\corref{}\thanksref{t1}\ead[label=e1]{svdpas@math.leidenuniv.nl}}
  \and
  \author{\fnms{A.W.} \snm{van der Vaart}\thanksref{t2}\ead[label=e2]{avdvaart@math.leidenuniv.nl}}

  \thankstext{t1}{Research supported by Netherlands Organization for Scientific
Research NWO.}
\thankstext{t2}{The research leading to these results has received funding from the
European
  Research Council under ERC Grant Agreement 320637.}

  \runauthor{S.L. van der Pas and A.W. van der Vaart}

  \address{Mathematical Institute, Leiden University, \\ 
          \printead{e1,e2}}

\end{aug}

\begin{abstract}
We introduce a Bayesian estimator of the underlying class structure in the stochastic block model, when the number of classes is known. The estimator is the posterior mode corresponding to a Dirichlet prior on the class proportions, a generalized Bernoulli prior on the class labels, and a beta prior on the edge probabilities. We show that this estimator is strongly consistent when the expected degree is at least of order $\log^2{n}$, where $n$ is the number of nodes in the network. 
\end{abstract}

\begin{keyword}[class=MSC]
\kwd[Primary ]{62F15}
\kwd{90B15}
\end{keyword}

\begin{keyword}
\kwd{stochastic block model}
\kwd{community detection}
\kwd{networks}
\kwd{consistency}
\kwd{Bayesian inference}
\kwd{modularities}
\kwd{MAP estimation}
\end{keyword}

\end{frontmatter}

\section{Introduction}
The stochastic block model (SBM) \citep{Holland1983}  is a model for network data in which individual nodes are considered members of classes or communities, and the probability of a connection occurring between two individuals depends solely on their class membership. It has been applied to social, biological and communication networks, for example in \cite{Park2012}, \cite{Bickel2009} and \cite{Snijders1997} amongst many others. There are many extensions of the SBM for various applications, including the degree-corrected SBM \citep{Karrer2011, Zhao2012} which accounts for possible heterogeneity among nodes within the same class, and the mixed-membership SBM \citep{Airoldi2008}, in which  the assumption that the classes are disjoint is removed. These extensions allow for additional modelling flexibility. 

Two main SBM research directions are the recovery of the class labels (\emph{community detection}) and recovery of the remaining model parameters, consisting of the probability vector generating the class labels, and the class-dependent probabilities of creating an edge between nodes. In this paper, we focus on community detection, noting that once strong consistency of a community detection method has been established, consistency of the natural plug-in estimators for the remaining parameters follows directly by results in \citep{Channarond2012}. 

A large number of methods for recovering the class labels has been proposed. Those most closely related to this work are the modularities. \cite{Newman2004} introduced the term \emph{modularity} for `a measure of the quality of a particular division of a network'. They described one such measure for models in which edges are more likely to occur within classes than between classes, in which case there is a community structure in the colloquial sense, although the SBM does not require this assumption. \cite{Bickel2009} studied more general modularities, defining them as functions of the number of connections between all combinations of classes and the proportion of nodes placed in each class. They introduced the likelihood modularity, and provided general conditions under which modularities are consistent. Their method and theory was extended to the degree-corrected SBM by \cite{Zhao2012}. 

Spectral methods for community detection have gained in popularity, and refined results on error bounds are now available for the SBM and extensions of the SBM, as evidenced in \cite{Rohe2011}, \cite{Jin2015}, \cite{Sarkar2015} and \cite{Lei2015} for example. Many other algorithms have been introduced, most of them currently lacking formal proofs of consistency. A notable exception is the Largest Gaps algorithm \citep{Channarond2012}, which only takes the degree of each node as its input, and is strongly consistent under a separability condition.

A Bayesian approach towards recovering the class assignments in the SBM was first suggested by \cite{Snijders1997}, motivated by computational advantages of Gibbs sampling  over maximum likelihood estimation. They considered two classes and proposed  uniform priors on the class proportions and the edge probabilities. This approach was extended in \citep{Nowicki2001} to allow for more classes, with a Dirichlet prior on the class proportions and beta priors on the edge probabilities. \cite{Hofman2008} described a similar Bayesian approach for a special case of the SBM and suggested a variational approach to overcome the computational issues associated with maximizing over all possible class assignments. 

Bayesian methods for the SBM have barely been studied from a theoretical point of view, although
recent results for parameter recovery by \cite{Pati2015}, for detecting the number of communites by
\cite{Hayashi2016} and for an empirical Bayes approach to community detection by \cite{Suwan2016}
are encouraging. In this work, we provide theoretical results on community detection,
establishing that the Bayesian posterior mode is strongly consistent for the class labels if the
expected degree is at least of order $\log^2{n}$, where $n$ is the number of nodes. This is proven
by relating the posterior mode to the maximizer of the likelihood modularity of \cite{Bickel2009}.
The likelihood modularity has been claimed to be strongly consistent under the weaker assumption
that the expected degree is of larger order than $\log{n}$ \citep{Bickel2009, Zhao2012,
  Bickel2015}. However, their proof assumes that the likelihood modularity is globally Lipschitz,
while it is only locally so. The Bayesian method is based on a combination of likelihood and prior,
and for this reason the proof of our main theorem, Theorem \ref{thm:strongconsistency},
runs into a similar problem. We were able to resolve this only under the slightly stronger assumption that the expected degree is of larger
order than $(\log{n})^2$. The literature on other methods for community detection shows that
the order $\log n$ is sufficient for consistent detection. However, these
results are usually obtained under additional assumptions such as 
a restriction to two classes or an ordering of the connection probabilities,
and their implications for the likelihood or Bayesian modularities is unclear.
We discuss this and the relevant literature further following the statement of our main result in
Section \ref{sec:consistency}.

This paper is organized as follows. We introduce the SBM and the associated notation in Section~\ref{sec:SBM}. Our main results are in Section~\ref{sec:bayesian}, where we describe the prior and the link with the likelihood modularity, present the consistency results and discuss the underlying assumptions, especially those on the expected degree. The method is illustrated on a data set in Section~\ref{sec:application}, and we conclude with a Discussion in Section~\ref{sec:discussion}. All proofs are given in the Appendix. 

\section{\label{sec:SBM}The Stochastic Block Model}
We introduce the notation and generative model for the SBM with $K \in \{1,2,\ldots\}$
classes. Consider an undirected random graph with $n$ nodes, numbered $1, 2, \ldots , n$, and edges
encoded by the $n \times n$ symmetric adjacency matrix $(A_{ij})$, with entries in $\{0,1\}$. Thus $A_{ij}=A_{ji}$ is equal to
1 or 0 if the nodes $i$ and $j$ are or are not connected by an edge, respectively.
Self-loops are not allowed, so $A_{ii} = 0$ for $i = 1, \ldots, n$. The generative model for the
random graph is:
\begin{enumerate}
\item The nodes are randomly labeled with i.i.d. variables $Z_1, \ldots, Z_n$, taking
values in a finite set $\{1, . . . , K \}$, according to probabilities $\pi = (\pi_1,\ldots,\pi_K)$.
\item Given $Z = (Z_1, \ldots, Z_n)$, the edges are independently generated as Bernoulli variables with $\mathbb{P}(A_{ij} = 1 \mid Z ) = P_{Z_i, Z_j}$, for $i < j$, for a given $K \times K$ symmetric matrix $P = (P_{ab})$.
\end{enumerate}
The probability vector $\pi$ is considered fixed, but unknown.  Although this is not visible in the notation, the matrix $P$ may change with $n$, a case of particular interest being that $P$ tends to zero, which gives a sparse graph. The order of magnitude of $\|P\|_\infty = \max_{a,b} P_{ab}$ is the same as the order of magnitude of $\rho_n = \sum_{a,b} \pi_a\pi_b P_{ab}$, the probability of there being an edge between two randomly selected nodes. The \emph{expected degree} of a randomly
selected node is $\lambda_n = (n-1)\rho_n$, and twice the expected total number of edges in the network is $\mu_n = n(n-1)\rho_n$. 

The likelihood for the model is given by
\begin{equation}
\prod_{i<j} P_{Z_iZ_j}^{A_{ij}}(1-P_{Z_iZ_j})^{1-A_{ij}}\prod_{i}\pi_{Z_i}
=\prod_{a\le b} P_{ab}^{O_{ab}(Z)}(1-P_{ab})^{n_{ab}(Z)-O_{ab}(Z)}\prod_a\pi_a^{n_a(Z)},
\label{eq:likelihood}
\end{equation}
where $O_{ab}(Z)$ is the number of edges between nodes labelled $a$ and $b$ by the labelling $Z$,
$n_{ab}(Z)$ is the maximum number of edges that can be created between nodes labelled $a$ and $b$,
and $n_a(Z)$ is the number of nodes labelled $a$, and $a$ and $b$ range over $\{1,2,\ldots, K\}$.

More formally, for a given labelling $e = (e_1, \ldots, e_n) \in \{1, \ldots, K\}^n$ of nodes, 
and class labels $a, b \in \{1, \ldots, K\}$, we define
\begin{align*}
O_{ab}(e) &= 
\begin{cases}
\sum_{i, j} A_{ij}\1_{\{e_i = a, e_j = b\}}, & a \neq b,\\
\sum_{i < j} A_{ij}\1_{\{e_i = a, e_j = b\}}, & a = b,
\end{cases}\\
n_{ab}(e) &= \begin{cases}
n_a(e)n_b(e), & a \neq b,\\
\tfrac{1}{2}n_a(e)(n_a(e) -1), & a = b,
\end{cases}\\
n_{a}(e) &= \sum_{i = 1}^n \1_{\{e_i = a\}}.
\end{align*} 
Since the matrix $A$ is symmetric with zero diagonal by assumption,
for $a\not=b$ the variable $O_{ab}(e)$ can also be written as $\sum_{i<j}A_{ij}[\1_{\{e_i = a, e_j = b\}}+\1_{\{e_j = a, e_i = b\}}]$, which
explains the different appearances of the diagonal and off-diagonal entries.
The numbers $n_{ab}(e)$ are equal to the numbers $O_{ab}(e)$ when all $A_{ij}$ are equal to 1.
We collect the variables $O_{ab}(e)$ and $n_{ab}(e)$ in $K \times K$ matrices $O(e)$ and $n(e)$.

Now consider the $K \times K$ probability matrix $R(e, c)$ and $K$ probability vector $f(e)$ with entries
\begin{equation}\label{eq:defR}
R_{ab}(e, c) = \frac{1}{n} \sum_{i=1}^n \1_{\{e_i = a, c_i = b\}},  \qquad\quad f_a(e) = \frac{n_a(e)}{n}.
\end{equation}
The row sums of $R(e,c)$ are equal to $R(e,c)\1 = f(e)$, while the column sums are equal to $\1^TR(e,c) = f(c)^T$. Thus, the matrix $R(e,c)$ can be seen as a coupling of the marginal probability vectors $f(e)$ and $f(c)$. If $e = c$, then it is diagonal with diagonal $f(c) = f(e)$. More generally, the matrix can be viewed as measuring the discrepancy between labellings $e$ and $c$. This can be precisely measured as half the $L_1$-distance of $R(e,c)$ to its diagonal, as evidenced by Lemma \ref{lem:Rdiscrepancy}, which is noted in \cite{Bickel2009}. 

For a vector $v$ we denote by $\Diag(v)$ the diagonal matrix with diagonal $v$, and 
for a matrix $M$ we denote its diagonal by $\diag(M)$. 

\begin{lem} \label{lem:Rdiscrepancy}
For every labelling $c, e$ in the $K$-class stochastic block model: 
\begin{equation*}
\frac{1}{n} \sum_{i=1}^n \1_{\{c_i \neq e_i\}} = \tfrac{1}{2}\| \Diag(f(c)) - R(e,c)\|_1.
\end{equation*}
\end{lem}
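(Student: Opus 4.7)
The plan is a direct bookkeeping computation exploiting the marginal identities of the coupling matrix $R(e,c)$; there is no real obstacle, but care is needed with the diagonal of $\Diag(f(c))-R(e,c)$ versus its off-diagonal entries when splitting the $\ell_1$-norm.

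First I would unpack the matrix $M:=\Diag(f(c))-R(e,c)$ entrywise. The off-diagonal entries are simply $M_{ab}=-R_{ab}(e,c)$ for $a\neq b$, and these are $\le 0$ with $|M_{ab}|=R_{ab}(e,c)$. For the diagonal, I would use the column-sum identity $\1^T R(e,c)=f(c)^T$ stated in the paper, giving $f_a(c)=\sum_b R_{ba}(e,c)$, so that
\begin{equation*}
M_{aa}=f_a(c)-R_{aa}(e,c)=\sum_{b\neq a}R_{ba}(e,c)\ge 0.
\end{equation*}
In particular $|M_{aa}|=\sum_{b\neq a}R_{ba}(e,c)$, which is a key observation because it converts the diagonal contribution into a sum of off-diagonal $R$-entries.

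Next I would sum all absolute values, interpreting $\|\cdot\|_1$ as the entrywise $\ell_1$-norm:
\begin{equation*}
\|M\|_1=\sum_a |M_{aa}|+\sum_{a\neq b}|M_{ab}|=\sum_a\sum_{b\neq a}R_{ba}(e,c)+\sum_{a\neq b}R_{ab}(e,c)=2\sum_{a\neq b}R_{ab}(e,c),
\end{equation*}
where the last step is a relabeling of summation indices in the first term.

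Finally I would identify $\sum_{a\neq b}R_{ab}(e,c)$ with the disagreement fraction. Since $R_{aa}(e,c)=\tfrac{1}{n}\sum_i\1_{\{e_i=a,\,c_i=a\}}$, summing over $a$ yields $\sum_a R_{aa}(e,c)=\tfrac{1}{n}\sum_i \1_{\{c_i=e_i\}}$. Together with $\sum_{a,b}R_{ab}(e,c)=1$, this gives
\begin{equation*}
\sum_{a\neq b}R_{ab}(e,c)=1-\tfrac{1}{n}\sum_{i=1}^n\1_{\{c_i=e_i\}}=\tfrac{1}{n}\sum_{i=1}^n\1_{\{c_i\neq e_i\}},
\end{equation*}
and substituting into the formula for $\|M\|_1$ finishes the proof. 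The only place requiring a moment's thought is the sign analysis of $M_{aa}$ via the column-sum identity, which is what makes the diagonal contribute exactly the off-diagonal mass rather than being absorbed into an absolute value cancellation.
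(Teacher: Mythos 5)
Your proof is correct and follows essentially the same route as the paper's: both arguments rest on the sign analysis ($M_{aa}\ge 0$, $M_{ab}\le 0$ for $a\ne b$) together with the normalization of $R(e,c)$ and $\Diag(f(c))$, so that the $\ell_1$-norm equals twice the diagonal mass $\sum_a(f_a(c)-R_{aa}(e,c))=1-\sum_a R_{aa}(e,c)$. Your use of the column-sum identity to write $M_{aa}=\sum_{b\neq a}R_{ba}(e,c)$ explicitly is just a slightly more concrete bookkeeping of the paper's observation that both matrices sum to one.
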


\begin{proof}
The diagonal of $R(e,c)$ gives the fractions of labels on which $c$ and $e$ agree. Hence the left side of the lemma is $1 - \sum_a R_{aa}(e,c) = \sum_a (f_a(c) - R_{aa}(c))$ . The elements of both $K \times K$ matrices $\Diag (f(c)) $ and $R(e,c)$ can be viewed as probabilities that add up to 1. Thus the sum of the differences of the diagonal elements is minus the sum of the differences of the off-diagonal elements. Because $f_a(c) \geq R_{aa}(e,c)$ for every $a$, we have $\sum_a (f_a(c) - R_{aa}(e,c)) = \sum_a |f_a(c) - R_{aa}(e,c)|$. Similarly the off-diagonal elements of $\Diag (f(c))$, which are zero, are smaller than the off-diagonal elements of $R(e, c)$ and hence we can add absolute values. Thus the sum over the diagonal is half the sum of the absolute values of all terms in $\Diag (f(c)) - R(e,c)$.
\end{proof}

\section{\label{sec:bayesian}Bayesian Approach to Community Detection}
Our main results are presented in this section. We first discuss the choice of prior in Section \ref{sec:prior}, and define the estimator, in Section \ref{sec:BayesMod}. The resulting Bayesian modularity is closely related to the likelihood modularity of \cite{Bickel2009}. The relationship is clarified in Section \ref{sec:MLsim}. We briefly consider the issue of identifiability in the SBM in Section \ref{sec:ident}, and conclude with our main theorem on the strong consistency of the Bayesian modularity in Section \ref{sec:consistency}.

\subsection{\label{sec:prior}The prior}
We adopt the Bayesian approach of \cite{Nowicki2001}. We put prior distributions on
the parameters of the stochastic block model with $K$ known, 
the vector $\pi$ and the matrix $P$, yielding a joint probability distribution of $(A,Z,\pi,P)$.
Next we marginalize over $\pi$ and $P$ as in \cite{McDaid2013}, leading to a joint distribution of $(A,Z)$.
Finally we ``estimate'' the unobserved vector $Z$ by the posterior mode of the conditional distribution of $Z$ given $A$.
From a frequentist point of view this means that $Z$ is treated as a parameter of the problem, equipped with
a hierarchical prior that chooses first $\pi$ and then $Z$. Accordingly we shall change notation from $Z$ to $e$,
reserving $Z$ for the frequentist description of the stochastic block model in Section~\ref{sec:SBM}.

The prior on $\pi$ is a Dirichlet, and independently the $P_{ab}$ for $a\le b$ receive independent beta priors:
\begin{align*}
\pi &\sim {\rm Dir}(\alpha, \ldots, \alpha),\\
P_{ab} &\stackrel{i.i.d.}{\sim} {\rm Beta}(\beta_1, \beta_2), \quad 1 \leq a \leq b \leq K.
\end{align*} 
This is essentially the same set-up as in \cite{Nowicki2001} and \cite{McDaid2013}, except that we use a more flexible ${\rm Beta}(\beta_1, \beta_2)$ instead of a uniform prior on the $P_{ab}$. We assume $\alpha, \beta_1, \beta_2 > 0$.

We complete the Bayesian model by specifying class labels $e=(e_1,\ldots,e_n)$ and edges $A=(A_{ij}: i<j)$ through
\begin{align*}
e_i\mid \pi, P&\stackrel{i.i.d.}{\sim} \pi,\quad 1\le i\le n,\\
A_{ij}\mid \pi, P, e&\stackrel{ind.}{\sim} {\rm Bernoulli}(P_{e_i,e_j}),\quad 1\le i<j\le n.
\end{align*}
Abusing notation we write $p(e)$, $p(A\mid e)$  and $p(e\mid A)$ for  marginal and conditional probability
density functions.

\subsection{\label{sec:BayesMod}The Bayesian modularity}
The Bayesian estimator of the class labels will be the posterior mode, that is:
\begin{equation*}
\widehat e = \argmax_e p(e \mid A).
\end{equation*}
The posterior mode can be interpreted as a modularity-based estimator in the sense of \cite{Bickel2009}, 
in that it maximizes a function that only depends on the $O_{ab}(e)$ and the $n_a(e)$. 
This can be seen from the joint density of $(A,e)$, which is found by marginalizing the
likelihood \eqref{eq:likelihood} over $\pi$ and $P$. 
The conjugacy between the multinomial and Dirichlet distributions
gives the marginal density of the class assignment $e$ as:
\begin{align}
p(e) 
&=\int_{S_K}\prod_a \pi_a^{n_a(e)}\frac{\prod_a\pi_a^{\alpha-1}}{D(\alpha)}\,d\pi
=\frac{\Gamma(\alpha K)}{\Gamma(\alpha)^K \Gamma(n + \alpha K)} \prod_a\Gamma(n_a(e) + \alpha).\label{eq:marginaloverpi}
\end{align} 
Here the integral is relative to the Lebesgue measure on the $K$-dimensional unit simplex and
$D(\alpha)=\Gamma(\alpha)^K/\Gamma(K\alpha)$ is the norming constant for the Dirichlet density.
Similarly the conjugacy between the Bernoulli and Beta distributions gives the marginal conditional density of
$A$ given $e$ as:
\begin{align}
p(A \mid e) &=  \int_{[0,1]^{K(K+1)/2}}\prod_{a\le b} P_{ab}^{O_{ab}(e)}(1-P_{ab})^{n_{ab}(e)-O_{ab}(e)}
\prod_{a\le b}\frac{P_{ab}^{\beta_1 - 1}(1-P_{ab})^{\beta_2 - 1}}{B(\beta_1, \beta_2)}\,dP\nonumber\\
&=\prod_{a \leq b} \frac{1}{B(\beta_1, \beta_2)} B(O_{ab}(e) + \beta_1, n_{ab}(e) - O_{ab}(e) + \beta_2), \label{eq:marginaloverP}
\end{align}
where $B(x,y) = \Gamma(x)\Gamma(y)/\Gamma(x+y)$ is the beta-function. The joint density 
of $A$ and $e$ is given by the product of \eqref{eq:marginaloverpi} and \eqref{eq:marginaloverP}, 
and $n^{-2}$ times its logarithm is up to a constant that is free of $e$ equal to 
\begin{align*}
Q_B(e) = \frac{1}{n^2}\sum_{1 \leq a \leq b \leq K} \log B(\Oab + \beta_1, \nab - \Oab + \beta_2) 
+ \frac{1}{n^2}\sum_{a= 1}^K \log \Gamma(n_a(e) + \alpha).
\end{align*}
This is a  modularity in the sense of \cite{Bickel2009}, which we define as the \emph{Bayesian modularity}.
As $p(e \mid A$) is proportional to $p(e, A)$, the posterior mode is equal to the class 
assignment that maximizes the Bayesian modularity, so the Bayesian estimator is equal to:
\begin{equation}\label{eq:defestimator}
\widehat e = \argmax_e Q_B(e).
\end{equation}

\subsection{\label{sec:MLsim}Similarity to the likelihood modularity}
The Bayesian modularity $Q_B(e)$ consists of a two parts, originating from the likelihood and the
prior on the classification, respectively. The first part is close to the \emph{likelihood modularity}  given by
\begin{equation*}
Q_{ML}(e) = \frac{1}{n^2}\sum_{1\le a \leq b\le K} n_{ab}(e)\,\tau\Bigl(\frac{O_{ab}(e)}{n_{ab}(e)} \Bigr),
\end{equation*}
where $\tau(x) = x\log{x} + (1-x)\log(1-x)$. This criterion, obtained in \cite{Bickel2009}, 
results from replacing in the log conditional likelihood of $A$ given $e$ 
(the logarithm of \eqref{eq:likelihood} with $Z$ replaced by $e$ and discarding the term involving the
parameters $\pi_a$) the parameters $P_{ab}$ by
their maximum likelihood estimators $\hat P_{ab}=O_{ab}(e)/n_{ab}(e)$. 
In other words, the parameters are \emph{profiled out} rather than
integrated out as for the Bayesian modularity. The corresponding estimator 
$$\widehat e_{ML} = \argmax_e Q_{ML}(e)$$
is consistent, and hence one may hope that the Bayesian estimator can be proved consistent
by showing that the Bayesian and likelihood modularities are close. This will  indeed be
our line of approach, but the execution must be done with care. For instance,
the second, prior part of the Bayesian modularity does play a role in the proof of strong consistency,
although it is negligible when proving weak consistency.

The following lemma links the Bayesian and likelihood modularities.

\begin{lem}\label{lem:bayesstirling}
There exists a constant $C$ such that, for ${\cal E}=\{1,\ldots, K\}^n$ the set of all possible labellings:
\begin{align*}
&\max_{e\in {\cal E}}\Bigl| Q_B(e)-Q_{ML}(e)-Q_P(e)\Bigr|\le \frac{C\log n}{n^2}.
\end{align*}
for 
$$Q_P(e)= \frac{1}{n^2}\sum_{a: n_a + \floor{\alpha} \geq 2}  n_a(e)\log (n_a(e))-\frac1n.$$
Consequently 
$\max_{e\in {\cal E}}\bigl| Q_B(e)-Q_{ML}(e)\bigr|= {\cal O}\bigl(\log n/{n}\bigr)$.
\end{lem}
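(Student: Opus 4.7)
The plan is to apply Stirling's formula to the Gamma and Beta functions that make up $Q_B(e)$, identify the leading-order contributions as $Q_{ML}(e)$ and $Q_P(e)$, and control the remainder uniformly in $e$. Since $K$ is fixed, both sums in $Q_B(e)$ contain only $O(1)$ terms, so it suffices to show that each term contributes an error of at most $O(\log n)$; dividing by $n^2$ then yields the claimed bound. The delicate points are the boundary cases where the arguments of the Gamma or Beta functions are small, so that Stirling's formula breaks down and direct computation is required.

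First I would handle the likelihood part $\log B(O_{ab}(e) + \beta_1, n_{ab}(e) - O_{ab}(e) + \beta_2)$. Writing $\log B(x,y) = \log\Gamma(x) + \log\Gamma(y) - \log\Gamma(x+y)$ and applying Stirling's formula $\log\Gamma(z) = z\log z - z + O(\log z)$ (uniform for $z$ bounded away from $0$), the linear $-x - y + (x+y)$ terms cancel, and expanding $\log(O_{ab}(e) + \beta_1) = \log O_{ab}(e) + O(1/O_{ab}(e))$ and similarly for the other factors gives
\[
\log B\bigl(O_{ab}(e) + \beta_1,\, n_{ab}(e) - O_{ab}(e) + \beta_2\bigr) = n_{ab}(e)\,\tau\bigl(O_{ab}(e)/n_{ab}(e)\bigr) + O(\log n)
\]
whenever $O_{ab}(e) \geq 1$ and $n_{ab}(e) - O_{ab}(e) \geq 1$. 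The corner cases $O_{ab}(e) \in \{0, n_{ab}(e)\}$ need a direct check: for example when $O_{ab}(e) = 0$ the ratio identity $\log[\Gamma(n_{ab}(e)+\beta_2)/\Gamma(n_{ab}(e)+\beta_1+\beta_2)] = -\beta_1 \log n_{ab}(e) + O(1)$ yields $\log B = O(\log n)$, matching $n_{ab}(e)\tau(0)=0$. The degenerate case $n_{ab}(e) = 0$ gives $O(1)$ on both sides.

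Next I would treat the prior part $\log\Gamma(n_a(e) + \alpha)$. For $a$ with $n_a(e) + \floor{\alpha} \geq 2$ (ensuring $n_a(e)+\alpha \geq 2$), Stirling gives $\log\Gamma(n_a(e) + \alpha) = n_a(e)\log n_a(e) - n_a(e) + O(\log n)$, via $\log(n_a(e)+\alpha) = \log n_a(e) + O(1/n_a(e))$ for $n_a(e) \geq 1$, and by a direct computation when $n_a(e) = 0$ (which is only possible if $\floor{\alpha} \geq 2$). Summing the resulting expressions over the included $a$ and noting that $\sum_a n_a(e) = n$ differs from the restricted sum $\sum_{\text{incl}} n_a(e)$ by at most $K$, one obtains, after dividing by $n^2$, exactly $Q_P(e) + O(\log n/n^2)$. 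The excluded classes have $n_a(e) \leq 1$, so $\log\Gamma(n_a(e) + \alpha) = O(1)$ and $n_a(e)\log n_a(e) = O(1)$, contributing only $O(1/n^2)$. Combining this with the Beta analysis gives the first assertion. The consequence $\max_e |Q_B(e) - Q_{ML}(e)| = O(\log n/n)$ is then immediate from the trivial bound $|Q_P(e)| \leq \log n/n + 1/n = O(\log n/n)$, which follows from $n_a(e)\log n_a(e) \leq n_a(e)\log n$ together with $\sum_a n_a(e) = n$.

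The main obstacle is not the Stirling expansion itself but uniformity in $e \in {\cal E}$: the exclusion condition $n_a(e) + \floor{\alpha} \geq 2$ in the definition of $Q_P$ and the separate treatment of the corner cases $O_{ab}(e) \in \{0, n_{ab}(e)\}$ are precisely what allow us to bypass the failure of Stirling's formula near the boundary of its domain, while keeping the number of exceptional terms at $O(K) = O(1)$ so that their contribution is absorbed into the overall $O(\log n/n^2)$ bound.
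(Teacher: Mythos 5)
Your proposal is correct and follows essentially the same route as the paper: a term-by-term Stirling expansion of the Beta and Gamma functions, with the boundary cases $O_{ab}(e)\in\{0,n_{ab}(e)\}$, $n_{ab}(e)\le 1$ and $n_a(e)+\floor{\alpha}\le 1$ treated separately, and the observation that only $O(K^2)=O(1)$ terms occur so each may contribute an $O(\log n)$ error. The only cosmetic difference is that the paper uses the non-asymptotic Robbins form of Stirling together with monotonicity of $\Gamma$ to sandwich the non-integer arguments between integer ones, whereas you invoke the asymptotic expansion $\log\Gamma(z)=z\log z-z+O(\log z)$ directly; the content is the same.
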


\subsection{\label{sec:ident}Identifiability and consistency}

A classification $\widehat e$ is said to be \emph{weakly consistent} if the fraction of misclassified nodes tends to zero (partial recovery), and \emph{strongly consistent} if the probability of misclassifying any of the nodes tends to zero (exact recovery). In defining consistency in a precise manner, the complication of the possible unidentifiability of the labels needs to be dealt with. From the observed data $A$ we can at best recover the partition of the $n$ nodes in the $K$ classes
with equal labels $Z_i$, but not the values $Z_1,\ldots, Z_n$ of the labels, in the set
$\{1,2,\ldots, K\}$, attached to the classes.  Thus consistency will be up to a permutation of labels.

To make this precise define, for a given permutation $(1,\ldots, K) \to \left(\sigma(1),\ldots, \sigma(K)\right)$, the
\emph{permutation matrix} $P_\sigma$ as the matrix with rows 
\begin{align*}
&e_{\sigma(1)}^T\\
&\ \vdots\\
&e_{\sigma(K)}^T,
\end{align*}
for $e_1,\ldots, e_K$ the unit vectors in $\mathbb{R}^K$. Then pre-multiplication
of a matrix by $P_\sigma$ permutes the rows, and post-multiplication by $P_\sigma^T$ the columns:
$P_\sigma R$ is the matrix with $j$th row equal to the $\sigma(j)$th row of $R$, and $RP_\sigma^T$ is the
matrix with $j$th column the $\sigma(j)$th column of $R$. Thus $P_\sigma R(e,Z)$ is the matrix
that would result if we would permute the labels of the classes of the assignment $e$,
and $P_\sigma PP_\sigma^T$ and $P_\sigma R(e,Z)P_\sigma^T$ are the matrices that would result if we would 
relabel the classes throughout. Since we cannot recover the labels, the matrix
$P_\sigma R(e,Z)$ is just as good or bad as $R(e,Z)$ for measuring discrepancy between a labelling $e$ and the true labelling $Z$; furthermore, nothing should change if we choose
different names for the classes. 

Thus, taking into account the unidentifiability of the labels, by Lemma \ref{lem:Rdiscrepancy}, 
an estimator $\widehat e$ is \emph{weakly consistent} if
\begin{equation*}
\|P_\sigma R(\widehat e, Z) - \Diag(f(Z))\|_1 \to 0,
\end{equation*}
for some permutation matrix $P_\sigma $. The classification $\widehat e$ is said to be \emph{strongly consistent} if 
\begin{equation*}
\mathbb{P}(P_\sigma R(\widehat e, Z) = \Diag (f(Z)) ) \to 1,
\end{equation*}
for some permutation matrix $P_\sigma$. 

The permutation matrix $P_\sigma$ is for large $n$ uniquely defined: if
  $\|({P_\sigma})_jR- \Diag(\pi)\|_1\le \min_a \pi_a$, for $j=1,2$, then $(P_\sigma)_1= (P_\sigma)_2$. This follows because the
assumption implies that $\|(P_\sigma)_1^{-1}\Diag(\pi)-(P_\sigma)_2^{-1}\Diag(\pi)\|_1\le 2\min_a\pi_a$, by the
triangle inequality and the fact that the $L_1$-norm is invariant under permutations.
Furthermore, for $P_\sigma=(P_\sigma)_2(P_\sigma)_1^{-1}$  the left side is $\|P_\sigma \Diag(\pi)-\Diag(\pi)\|_1$, which
 is at least two times the sum of the two smallest coordinates
of $\pi$ if $P_\sigma\not=I$.

A necessary requirement for consistency is that the classes can be recovered from the
likelihood, i.e.\ the model parameters must  be identifiable. 
If $\pi$ has strictly positive coordinates, so that all labels will appear in the
data eventually, then as explained in \cite{Bickel2009} an appropriate condition is that $P$ does not have two identical rows.
If $\pi_a=0$ for some $a$, then class $a$ will never be consumed; the
identifiability condition should then be imposed after deleting the $a$th column from $P$.
Thus, we call the pair $(P,\pi)$ \emph{identifiable} if the
rows of $P$ are different after removing the columns corresponding to zero coordinates of $\pi$.
Throughout we assume that $P$ is symmetric.

\subsection{\label{sec:consistency}Consistency results and assumptions}
We are now ready to present our results on consistency for the Bayesian maximum a posteriori (MAP) estimator \eqref{eq:defestimator}. Theorem \ref{thm:strongconsistency} shows strong consistency of the Bayesian estimator if $\lambda_n \gg (\log n)^2$. The proof rests on a proof of weak consistency under similar conditions, stated in the appendix as Theorem \ref{thm:weakconsistency}. 

Recall that $\rho_n = \sum_{a,b} \pi_a \pi_b P_{ab}$ is the probability of a new edge, and $\lambda_n = (n-1)\rho_n$ is the expected degree of a node. \\

\begin{thm}[strong consistency]
\label{thm:strongconsistency}
\begin{enumerate}[label = (\roman*)]
\item \label{thm:strongi}
If $(P,\pi)$ is fixed and identifiable with $0<P<1$ and $\pi>0$ then the MAP classifier $\widehat e
  =\arg\max_e Q_B(e)$  is strongly consistent. 
\item \label{thm:strongii}
If $P=\rho _nS$, where $(S,\pi)$ is fixed and identifiable with $S>0$ and $\pi>0$, then the MAP classifier 
$\widehat e=\arg\max_e Q_B(e)$  is strongly consistent if $\lambda_n \gg (\log n)^2$.
\end{enumerate}
\end{thm}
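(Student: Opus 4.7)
The plan is to use Lemma \ref{lem:bayesstirling} to reduce $Q_B$ to $Q_{ML}+Q_P$ up to a uniform error of order $\log n/n^2$, and then to combine (a) weak consistency (the appendix's Theorem \ref{thm:weakconsistency}) with (b) a local swap argument to upgrade to strong consistency. Weak consistency guarantees that with probability tending to $1$ the MAP $\widehat e$ agrees with $Z$ on all but a vanishing fraction of nodes, up to some permutation $P_\sigma$; by renaming classes we may assume $P_\sigma=I$. Note also that part \ref{thm:strongi} is the case $\lambda_n\asymp n\gg (\log n)^2$, so it is subsumed by part \ref{thm:strongii}; I only sketch \ref{thm:strongii}.

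The local swap step is the heart of the matter. Suppose, toward a contradiction, that on the weakly-consistent event there is still a node $i$ with $\widehat e_i=a\ne Z_i=:b$. Let $\widehat e'$ be identical to $\widehat e$ except at $i$, where $\widehat e'_i=b$. Only the $a$- and $b$-rows/columns of $O(\cdot)$ and $n(\cdot)$ change, and the changes are determined by $d_{ic}$, the number of edges from $i$ to nodes labelled $c$ under $\widehat e$. Expanding $Q_{ML}(\widehat e')-Q_{ML}(\widehat e)$ via the $\tau$-function and $Q_P(\widehat e')-Q_P(\widehat e)$ via the logarithms of class sizes, one finds after some cancellation that the leading term is of the form
\begin{equation*}
\sum_{c=1}^K d_{ic}\log\frac{\widehat P_{bc}(1-\widehat P_{ac})}{\widehat P_{ac}(1-\widehat P_{bc})}+R_i,
\end{equation*}
with $\widehat P_{cd}=O_{cd}(\widehat e)/n_{cd}(\widehat e)$ and $R_i$ a remainder governed by the local Lipschitz constant of $\tau$. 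Weak consistency implies $\widehat P_{cd}\approx P_{cd}$, so the expected value of the leading term equals a positive linear combination of KL-divergences between rows $b$ and $a$ of $P$, strictly positive by identifiability, and of order $\lambda_n$. Bernstein's inequality applied to $d_{ic}$ gives fluctuations of order $\sqrt{\lambda_n\log n}$, which is $o(\lambda_n)$ once $\lambda_n\gg\log n$; hence the swap strictly increases $Q_B$ with probability $\ge 1-n^{-2}$, and a union bound over the $n$ candidate nodes (and the $K(K-1)$ label pairs) yields a contradiction. Identifiability is used exactly here to guarantee that the KL-type signal is bounded below.

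The main obstacle, and the reason for the $(\log n)^2$ rate rather than $\log n$, is that $\tau'(x)=\log\bigl(x/(1-x)\bigr)$ is not globally bounded: the computation of the swap difference requires a Lipschitz bound for $\tau$ over an interval containing both $\widehat P_{cd}(\widehat e)$ and $\widehat P_{cd}(\widehat e')$. To keep this Lipschitz constant under control one must show that, uniformly over all labellings $\widehat e$ that are close to $Z$ and all single-node swaps, every $\widehat P_{cd}$ stays in some window $[c_1\rho_n,c_2\rho_n]$ bounded away from $0$ and $1$. The uniform-in-$\widehat e$ Bernstein bound needed for this costs an extra $\log n$ factor, and multiplying this by the Lipschitz blow-up $|\log\rho_n|\asymp\log n$ that already appears in the swap remainder $R_i$ produces the $(\log n)^2$ requirement. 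This is precisely the gap the authors identify in the proofs of \cite{Bickel2009, Zhao2012, Bickel2015}, where $Q_{ML}$ is treated as if globally Lipschitz.

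Finally, weak consistency itself (used above as a black box) will be established by showing that $n^{-2}$ times $Q_B$, indexed by the confusion matrix $R(e,Z)$, converges uniformly to a deterministic population criterion $F(R)$ whose unique maximizers (over couplings with marginal $\pi$) are the permutations of $\Diag(\pi)$; uniform convergence combined with Lemma \ref{lem:Rdiscrepancy} then forces $\|P_\sigma R(\widehat e,Z)-\Diag(f(Z))\|_1\to 0$, which is the premise of the local swap argument above.
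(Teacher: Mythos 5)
Your overall architecture (reduce $Q_B$ to $Q_{ML}+Q_P$ via Lemma \ref{lem:bayesstirling}, invoke weak consistency, then upgrade) matches the paper, but your upgrade step --- a single-node swap argument --- is genuinely different from the paper's, and as written it has a gap. The quantity $d_{ic}$, the number of edges from node $i$ to the nodes labelled $c$ \emph{under $\widehat e$}, is a function of the entire labelling $\widehat e$, which is data-dependent. You apply Bernstein's inequality to $d_{ic}$ and then take a union bound only over the $n$ candidate nodes and the $K(K-1)$ label pairs; this controls the degree profile evaluated at the fixed labelling $Z$, but not at $\widehat e$. To make the argument rigorous you must control the swap increment uniformly over all labellings $e$ in the weak-consistency neighbourhood, a set of cardinality $e^{\Theta(\epsilon_n n\log(1/\epsilon_n))}$ when the misclassified fraction is $\epsilon_n$. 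Since weak consistency gives no rate for $\epsilon_n$, this union-bound cost can swamp the Bernstein exponent: the perturbation of $d_{ic}$ caused by relabelling $\epsilon_n n$ nodes must be shown to be $o(\lambda_n)$ uniformly over \emph{which} $\epsilon_n n$ nodes are relabelled, and the fluctuation of order $\sqrt{\lambda_n\,\epsilon_n n\log(1/\epsilon_n)}$ that such a union bound forces is not $o(\lambda_n)$ in general. This is exactly the uniformity issue the paper's proof is built to handle: it stratifies by the Hamming distance $m$, lower-bounds the population gap $H_{P,n}(R(Z,Z))-H_{P,n}(R(e,Z))$ \emph{linearly} in $m/n$ (Lemmas \ref{LemmaStrongMaximality} and \ref{LemmaStrongMaximalitySmallRho}), and proves a supremum concentration bound over all $e$ at distance at most $m$ (Lemma \ref{LemmaXmod}) in which the union-bound cost $\binom{n}{m}K^m=e^{O(m\log(Kn/m))}$ is beaten by a Bernstein exponent of order $\rho_n mn$ precisely when $n\rho_n\gg\log n$. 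Your swap argument would need an analogous uniform statement, which you have not supplied.

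Two smaller points. First, part \ref{thm:strongi} is not simply subsumed by part \ref{thm:strongii}: the sparse analysis rests on a Poisson approximation valid as $\rho_n\to0$ (Lemma \ref{LemmaStrongMaximalitySmallRho}), so the dense case needs its own lower-bound lemma (Lemma \ref{LemmaStrongMaximality}). Second, you locate the source of the extra logarithmic factor (hence $(\log n)^2$) in the strong-consistency swap step; in the paper the strong-consistency step itself only needs $n\rho_n\gg\log n$, and it is the \emph{weak}-consistency proof --- specifically the modulus $l\bigl(\sqrt{\rho_n/n}\bigr)$ with $l(x)=x(1\vee\log(1/x))$ --- that forces $n\rho_n\gg(\log n)^2$. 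Your closing sketch of weak consistency does not confront this, so the stated rate is not actually justified by your argument.
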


The theorem distinguishes two cases: \ref{thm:strongi} is the \emph{dense} case, while \ref{thm:strongii} is the \emph{sparse} case. The second is the most interesting of the two, as it touches on the question how much information is required to recover the underlying community structure. Much recent research effort has gone into determining detection and computational boundaries, in particular for special cases of the SBM with $K = 2$ (see e.g. \cite{Mossel2012}, \cite{Chen2014},  \cite{Abbe2014} and \cite{Zhang2015}).  

\emph{Weakly} consistent estimation of the class labels for an arbitrary, but known, number of classes is possible under the assumption $\lambda_n \gg \log{n}$, as this was shown to hold for spectral clustering by \cite{Lei2015}.  \emph{Strong} consistency of maximum likelihood was shown to hold in the special cases of planted bisection and planted clustering if $K = 2$ by \cite{Abbe2014, Chen2014}, again under the assumption $\lambda_n \gg \log{n}$. \cite{Gao2015} and \cite{Gao2016} achieve optimality in different senses, under assumptions on the average within-community and between-community edge probabilities; \cite{Gao2015} introduce a two-stage procedure which achieves the optimal proportion of misclassified nodes in a special case where $P_{ab}$ can only take two values, while \cite{Gao2016} obtain minimax rates for the proportion of misclassified nodes in the degree corrected SBM.

Strong consistency of the likelihood modularity for an arbitrary number of classes $K$ has been
claimed under the same assumption $\l_n \gg \log{n}$ \citep{Bickel2009}, and those results have been
extended to the degree-corrected SBM \citep{Zhao2012}. However, these results were obtained by application of
an abstract theorem to the special case of the likelihood modularity, which would require the function
$\tau(x) = x\log{x} + (1-x)\log(1-x)$, or the function $\sigma(x) = x\log{x}$, to be globally
Lipschitz. As $\tau$ and $\sigma$ are only locally Lipschitz, it is still unclear whether
$\l_n \gg \log{n}$ is a sufficient condition for either weakly or strongly consistent estimation by
maximum likelihood. From our proof of Theorem \ref{thm:strongconsistency}, which proceeds by
comparing the Bayesian modularity the likelihood modularity, it immediately follows that
$\l_n \gg (\log{n})^2$ is certainly sufficient. Given weak consistency the problem can be
reduced to a neighbourhood of the true parameter on which the Lipschitz condition is reasonable.
However, it is precisely our proof of weak consistency that needs the additional $\log n$ factor.

The Largest Gaps algorithm of \cite{Channarond2012} is strongly consistent provided that $\min_{a \neq b} |\sum_{k = 1}^K \alpha_k(P_{ak} - P_{bk})|$ is at least of order $\sqrt{\log n/n}$, implying that at least one of the $P_{ab}$ is of the same order, and thus $\lambda_n \gg \sqrt{n\log n}$. This much stronger condition is not surprising, as the Largest Gaps algorithm only uses the degree of a node and does not take into account any finer information on the group structure, such as the information contained in the $O_{ab}$.

To the best of our knowledge, for $K > 2$, it remains to be shown that $\lambda \gg \log n$ is sufficient for strong consistency of any community detection method for the general SBM. For the minimax rate for the proportion of misclustered nodes in community detection, when only classes of sizes proportional to $n$ are considered, a phase transition when going from the case $K = 2$ to $K \geq 3$ was observed by \cite{Zhang2015}. Their results show that if $K = 2$, communities of the same size are most difficult to distinguish, while if $K \geq 3$, small communities are harder to discover. This shift in the nature of the communities that are harder to detect may be what has been preventing a general strong consistency result under the assumption $\l_n \gg \log{n}$ so far.

\section{\label{sec:application}Application}
Some options for implementing the Bayesian modularity are given in Section \ref{sec:implementation}, after which the results of applying the Bayesian and likelihood modularities to the well-studied karate club data of \cite{Zachary1977} are discussed in Section \ref{sec:karate}.

\subsection{\label{sec:implementation}Implementation}
Two recent works explicitly discuss implementation of Bayesian methods for the SBM. \cite{McDaid2013} followed the approach of \cite{Nowicki2001} and added a Poisson prior on $K$. After marginalizing over $\pi$ and $P$, they employ an allocation sampler to sample from the joint density of $K$ and $z$ given $A$, and use the posterior mode to estimate $K$. Their algorithm can scale to networks with approximately ten thousand nodes and ten million edges. \cite{Come2014}, claiming that the algorithm of \cite{McDaid2013} suffers from poor mixing properties, propose a greedy inference algorithm for the same problem. For the karate club data in Section \ref{sec:karate}, the network was small enough that a tabu search \citep{Glover1989}, run for a number of different initial configurations, yielded good results. We used $\alpha = 1/2$ for the Dirichlet prior, and $\beta_1 = \beta_2 = 1/2$ for the beta prior. 

\subsection{\label{sec:karate}Karate club} 
\begin{figure}[t]
\begin{center}
\subfigure
{
\label{fig:K2}
\includegraphics[width = 0.48\textwidth]{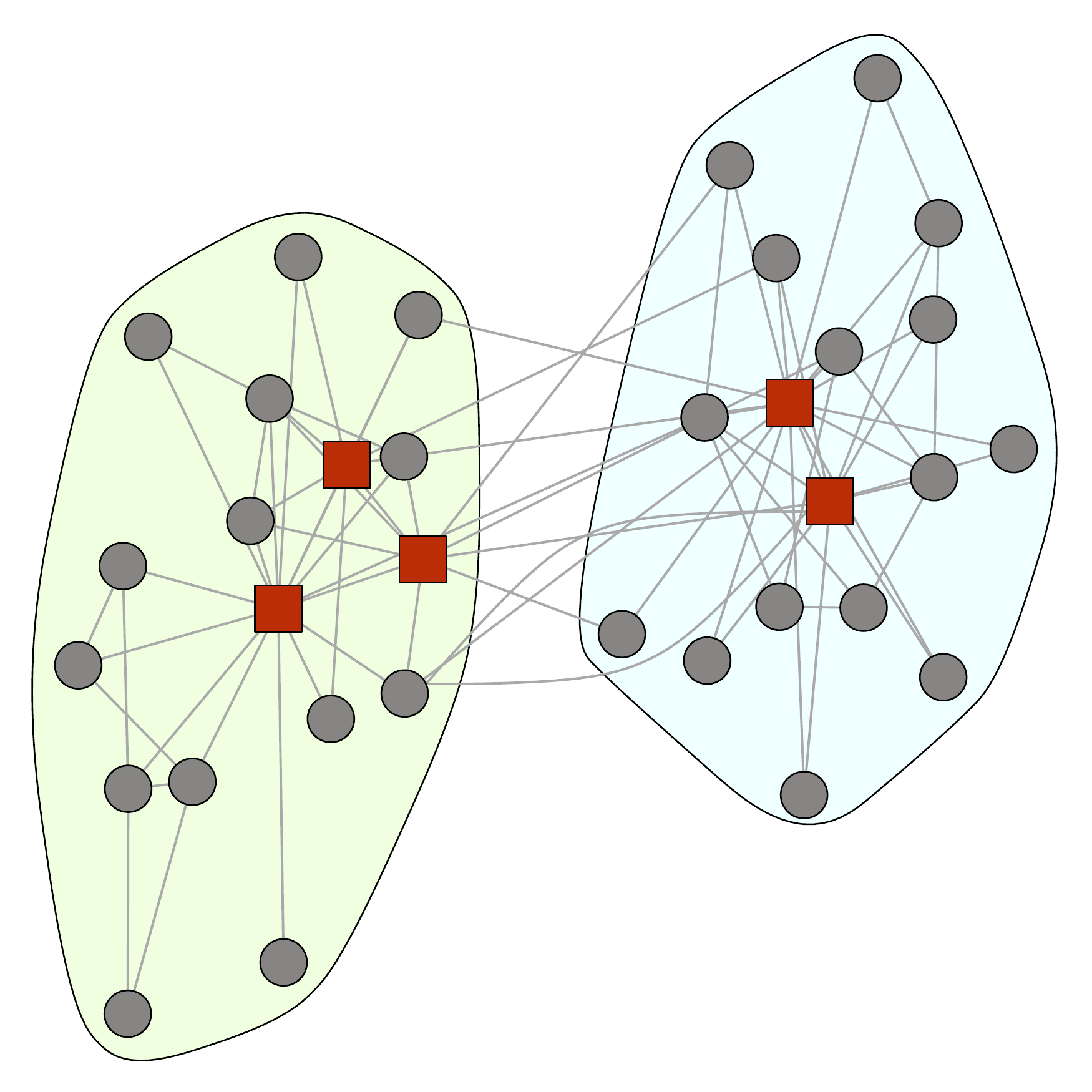}
}
\subfigure
{
\label{fig:K4}
\includegraphics[width = 0.48\textwidth]{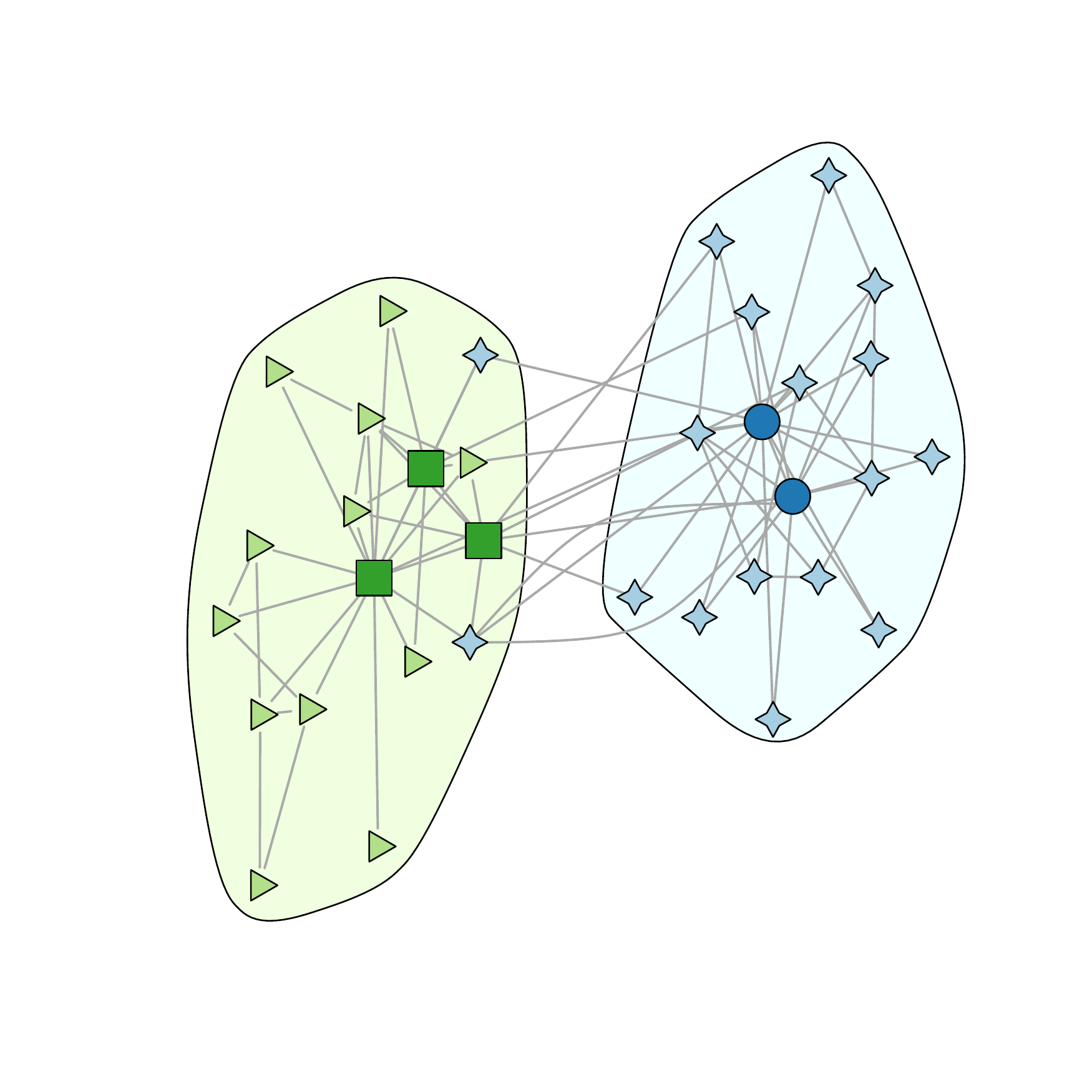}
}
\caption{Communities detected by the Bayesian modularity when $K = 2$ (left) and $K = 4$ (right), with $\alpha = \beta_1 = \beta_2 = 1/2$. The polygons contain the two groups the karate club was split into; the left one is Mr. Hi's club, the right one is the Officers' club. The shapes of the nodes represent the communities selected by the modularities. Figure made using the igraph package \citep{Csardi2006}.  }
\label{fig:karate}
\end{center} 
\end{figure}

\cite{Zachary1977} described a karate club which split into two clubs after a conflict over the price of the karate lessons. The new club was led by Mr. Hi, the karate teacher of the original club, while the remainder of the old club stayed under the former Officers' rule. The data consists of an adjacency matrix for those 34 individuals who interacted with other club members outside club meetings and classes. Each of these individuals' affiliations after the conflict is known. 

The communities selected by the Bayesian modularity for $K = 2$ and $K = 4$ are given in Figure \ref{fig:karate}. In both instances, the tabu search led to nearly the same solution for both the Bayesian and likelihood modularities, only differing at one node for $K = 4$, which is not surprising in light of Lemma \ref{lem:bayesstirling}. For $K = 2$, the results of \cite{Bickel2009} for this data set are recovered. For $K = 4$, the partition in Figure \ref{fig:karate} yields a higher value of the likelihood modularity than the partition into four classes found by \cite{Bickel2009}, and an even higher value is obtained by switching club member 20 to the second-largest class. This discrepancy is likely due to  the heuristic nature of the tabu search algorithm, and for the same reason, it may be the case that improvement over the partitions found by the Bayesian modularity in Figure \ref{fig:karate} are possible.

For $K = 2$, the communities found by the algorithms do not correspond in the slightest to the two karate clubs, instead grouping the nodes with the highest degrees, corresponding to Mr. Hi, the president of the original club, and their closest supporters, together. Incidentally, this partition is the same as the one returned by the Largest Gaps algorithm of \cite{Channarond2012}, which solely uses the degrees of the nodes and discards all other information.

These bad results are no reason to shelve the Bayesian and likelihood modularities, as there is no reason to believe that the two karate clubs form communities in the sense of the stochastic block model. Mr. Hi and the club's president are clear outliers within their groups, and neither of the algorithms were designed to be robust to such a phenomenon. The communities selected by the modularities are communities in the sense that they form connections within and between the groups in a similar fashion. This sense does not correspond to the social notion of a community in this setting. 

The results for four classes unify the social and stochastic senses of community. The prominent members of each of the new clubs are placed into two separate, small, communities. The other members are classified nearly perfectly, with two exceptions. However, one of those exceptional individuals is the only person described by \cite{Zachary1977} as being a supporter of the club's president before the split, who joined Mr. Hi's club, making this person's affiliation up for debate. The second is described as only a weak supporter of Mr. Hi. The increased number of communities allows for some outliers within the social communities, and leads to a more detailed understanding of the dynamics within both of the groups. We essentially recover the two communities, each with a core that is more connective than the remainder of te nodes.

\section{\label{sec:discussion}Discussion}
An advantage of Bayesian modelling is that it does not solely result in an estimator, but in a full posterior distribution. The posterior mode studied in this paper is but one aspect of the posterior, and its good behaviour in terms of consistency is encouraging. Further study into other aspects in the posterior may prove to be fruitful. One possible research direction would be to use the posterior to \emph{quantify uncertainty} in the estimate of the class labels. A second issue that may be resolved by the Bayesian approach is the question of estimating the number of classes, $K$. This remains an important open question, as noted by \cite{Bickel2009}, despite recent attempts (e.g. \cite{Saldana2014}, \cite{Chen2014-2} and \cite{Wang2015}). By introducing a prior on $K$, such as the Poisson-prior suggested by \cite{McDaid2013}, the number of communities $K$ can be detected by the posterior.

\appendix
\section{Proofs}
After stating some repeatedly used notation, this appendix starts with the proof of Theorem \ref{thm:weakconsistency}, which is a theorem on weak consistency of the Bayesian modularity. It is followed by a number of supporting Lemmas, after which we proceed to the proof of Theorem \ref{thm:strongconsistency}, and some additional supporting Lemmas.

We write $\diag(P)$ for the diagonal of $P$ if $P$ is a matrix, and $\Diag(f)$ for the 
diagonal matrix with diagonal $f$ if $f$ is a vector.

\subsection{Weak consistency}
The following quantities will be used in the course of multiple proofs.
The function $H_P$, with domain $K\times K$ probability matrices, is given by, for $\tau(u)=u\log u+(1-u)\log (1-u)$,
\begin{equation}\label{eq:HP}
H_P(R) = \frac{1}{2}\sum_{a, b} (R\1)_a(R\1)_b\, \tau\left(\frac{(RPR^T)_{ab}}{(R\1)_a(R\1)_b} \right).
\end{equation} 
For $\tau_0(u)=u\log(u)-u$, define
\begin{equation*}
G_P(R) =\frac{1}{2}\sum_{a, b}(R\1)_a(R\1)_b\,\tau_0\Bigl(\frac{(RPR^T)_{ab}}{(R\1)_a(R\1)_b}\Bigr).
\end{equation*}
The sums defining these functions are over all pairs $(a,b)$ with $1\le a,b\le K$, unlike
the sums defining the modularities $Q_B$ and $Q_{ML}$, which are restricted to $a\le b$.

\begin{thm}[weak consistency]
\label{thm:weakconsistency}
\begin{enumerate}[label = (\roman*)]
 \item \label{thm:weaki} If $(P,\pi)$ is fixed and identifiable, then the MAP classifier $\widehat
  e=\arg\max_z Q_B(e)$ is weakly consistent.
\item \label{thm:weakii} If $P=\rho_n S$ for $\rho_n \to 0$, and $(S,\pi)$ is fixed and identifiable, then the  MAP classifier $\widehat
  e=\arg\max_z Q_B(e)$  is weakly consistent provided $n\rho_n\gg (\log n)^2$.
  \end{enumerate}
\end{thm}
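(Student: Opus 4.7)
The plan is to establish weak consistency via the standard empirical-process strategy for $M$-estimators: reduce $Q_B$ to a deterministic population functional of the coupling matrix $R(e,Z)$, then argue that this functional has a unique (up to label permutation) maximum at $\Diag(f(Z))$. Lemma~\ref{lem:bayesstirling} bounds $\sup_e |Q_B(e) - Q_{ML}(e) - Q_P(e)| = O(\log n / n^2)$; the prior part $Q_P(e)$ is uniformly $O(\log n / n)$, which is $o(1)$ in the dense case~\ref{thm:weaki} and, in the sparse case~\ref{thm:weakii}, $o(\rho_n)$ under the much milder condition $n\rho_n \gg \log n$. Thus the analysis reduces to controlling $Q_{ML}(e)$.

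The natural population functional is $H_P$ from \eqref{eq:HP}. Conditional on $Z$, the variable $O_{ab}(e)$ is a sum of independent Bernoullis with conditional mean $n^2 (R(e,Z)PR(e,Z)^T)_{ab}$ up to lower-order corrections and a $1/2$ factor when $a=b$, while $n_{ab}(e) = n^2(R(e,Z)\1)_a(R(e,Z)\1)_b + O(n)$; substituting these means into $Q_{ML}$ yields exactly $H_P(R(e,Z))$. One must then show that $R \mapsto H_P(R)$ is uniquely maximised, over matrices $R$ whose column sums are close to $\pi$ (which holds uniformly in $e$ by the LLN $f(Z)\to \pi$), at the permutation-diagonal couplings $P_\sigma \Diag(\pi)$. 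Strict concavity of $\tau$ combined with the identifiability hypothesis (distinct rows of $P$ after deleting columns where $\pi$ vanishes) yields a quantitative separation: $\|R - P_\sigma \Diag(\pi)\|_1 \ge \delta$ implies $H_P(R) \le H_P(P_\sigma \Diag(\pi)) - c(\delta)\rho_n$. In the sparse case one divides by $\rho_n$ and uses $\tau(\rho_n x)/\rho_n \to \tau_0(x)=x\log x - x$, so that $H_P/\rho_n$ has a separated maximum expressible through the surrogate $G_P$ defined in the appendix.

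The main technical step is then the uniform concentration
\[
\sup_{e\in\mathcal{E}}\bigl|Q_{ML}(e)-H_P(R(e,Z))\bigr| = o_p(\rho_n).
\]
Conditioning on $Z$, Bernstein's inequality applied to $O_{ab}(e)$ (with variance $O(n^2\rho_n)$) and a union bound over the $K^n$ labellings and $O(K^2)$ pairs give $\sup_{e,a,b}|O_{ab}(e)/n_{ab}(e) - (RPR^T)_{ab}/((R\1)_a(R\1)_b)| = O_p(\sqrt{\rho_n \log n / n})$ on the high-probability event that every class size $n_a(e)$ is of order $n$; labellings violating this are handled separately by noting that they lie far from any truth in the $L_1$-metric of $R$. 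Transferring this deviation through $\tau$ costs a factor equal to the local Lipschitz constant of $\tau$ at arguments of order $\rho_n$, which is $|\log \rho_n|\asymp \log n$. The resulting deviation in $Q_{ML}$ is of order $\log n \cdot \sqrt{\rho_n \log n / n}$, which is $o(\rho_n)$ precisely when $n\rho_n \gg (\log n)^2$.

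Combining these ingredients, $Q_B(\widehat e) \ge Q_B(Z)$ yields $H_P(R(\widehat e, Z)) \ge H_P(\Diag(f(Z))) - o_p(\rho_n)$, and the quantitative separation forces $\|P_\sigma R(\widehat e, Z) - \Diag(f(Z))\|_1 \to 0$ in probability for the appropriate permutation $P_\sigma$, which is weak consistency by Lemma~\ref{lem:Rdiscrepancy}. The dense case~\ref{thm:weaki} follows from the same outline with a bounded Lipschitz constant for $\tau$ on a fixed compact subinterval of $(0,1)$, hence without any sparsity condition. The main obstacle is the failure of $\tau$ to be globally Lipschitz: handling this via localisation to arguments of order $\rho_n$ produces the $\log n$ Lipschitz blow-up that, combined with the $\sqrt{\log n / (n\rho_n)}$ cost of the union bound over $\mathcal{E}$, dictates the hypothesis $n\rho_n \gg (\log n)^2$ in case~\ref{thm:weakii}.
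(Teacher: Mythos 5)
Your overall strategy is the one the paper uses: reduce $Q_B$ to $Q_{ML}$ via Lemma \ref{lem:bayesstirling}, concentrate $Q_{ML}(e)$ around the population functional $H_P(R(e,Z))$ uniformly over the $K^n$ labellings by Bernstein's inequality and a union bound, and separate the maximum of $H_P$ (or of its Poisson surrogate $G_P$ after dividing by $\rho_n$) from its values on non-diagonal couplings using identifiability and compactness. Two points in your execution do not go through as written. First, your rate bookkeeping does not deliver the stated hypothesis: with a deviation of $O_p(\sqrt{\rho_n\log n/n})$ for the ratios and a Lipschitz blow-up of order $\log n$ from $\tau$ near $0$, the error in $Q_{ML}$ is of order $\sqrt{\rho_n(\log n)^3/n}$, and requiring this to be $o(\rho_n)$ gives $n\rho_n\gg(\log n)^3$, not $(\log n)^2$. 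The extra $\sqrt{\log n}$ in your concentration bound is spurious: the union bound over the $K^n$ labellings costs $n\log K\asymp n$ (not $n\log n$) in the exponent, so Bernstein gives $\max_e\|\wt O(e)-\E(\wt O(e)\mid Z)\|_\infty/n^2=O_p(\sqrt{\rho_n/n})$ (this is Lemma \ref{lem:O}); with that correction the condition $n\rho_n\gg(\log n)^2$ is recovered, exactly as in the paper's computation $l(\sqrt{\rho_n/n})\ll\rho_n$.

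Second, your normalisation by $n_{ab}(e)$ forces you to restrict to labellings with all class sizes of order $n$, and your disposal of the remaining labellings (``they lie far from any truth in the $L_1$-metric of $R$'') is not an argument: being far from the truth does not by itself show that such an $e$ cannot maximise $Q_B$; you would still need an upper bound on $Q_{ML}(e)$ for those $e$, which is precisely where your concentration estimate is unavailable. The paper avoids the issue entirely by never dividing by $n_{ab}$: Lemma \ref{lem:boundtau} gives $\bigl|v\tau(x/v)-v\tau(y/v)\bigr|\le l(|x-y|)$ with $l(x)=x(1\vee\log(1/x))$, uniformly in $v\in[0,1]$, so the difference $n_{ab}\tau(O_{ab}/n_{ab})-n_{ab}\tau(\E( O_{ab}\mid Z)/n_{ab})$ is controlled by $|O_{ab}-\E (O_{ab}\mid Z)|/n^2$ alone, with no lower bound on $n_{ab}$ required; this device also produces the single $\log n$ factor that you obtain by localising $\tau$ to arguments of order $\rho_n$. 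The remaining ingredients of your outline --- the negligibility of $Q_P$, the law of large numbers for the column sums $f(Z)$, and the compactness/identifiability separation with the passage to $G_P$ in the sparse case --- match the paper's Lemmas \ref{lem:maximality} and \ref{lem:maximality2}.
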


\begin{proof}
By Lemma~\ref{lem:bayesstirling} the Bayesian modularity $Q_B$ is equivalent to the likelihood
modularity $Q_{ML}$ up to order $(\log n)/n$. 
With the notation $\wt O_{ab}(e) = O_{ab}(e)$ if $a \neq b$, and $\wt O_{ab}(e) = 2O_{ab}(e)$ if $a = b$, 
the likelihood modularity is in turn equivalent up to the same order to 
\begin{equation}\label{eq:defLL}
\LL(e) = \frac1{2n^2}\sum_{a, b} n_a(e)n_b(e)\,\tau\Bigl(\frac{\wt O_{ab}(e)}{n_a(e)n_b(e)} \Bigr).
\end{equation}
Indeed the terms of $Q_{ML}(e)$ for $a <b$ are identical to the sums of the terms of 
$\LL(e)$ for $a<b$ and $a>b$, while for $a = b$ the terms of $Q_{ML}(e)$ and $\LL(e)$ differ
only subtly: the first uses $n_{aa}(e) = \tfrac{1}{2}n_a(e)(n_a(e)-1)$, where the second
uses $\tfrac{1}{2}n_a(e)^2$. Thus the difference is bounded in absolute value by
the sum over $a$ of (where $e$ is suppressed from the notation)
$$\Bigl|\frac {n_a^2}{2n^2}\tau\Bigl(\frac{\wt O_{aa}}{n_a^2} \Bigr)
-\frac{n_a\bigl(n_a-1)}{2n^2}\tau\Bigl(\frac{\wt O_{aa}}{n_a(n_a-1)} \Bigr)\Bigr|
\le \frac 1{2n}\|\tau\|_\infty+\frac{n_a^2}{2n^2}l\Bigl(\frac{\wt O_{aa}}{n_a^2(n_a-1)}\Bigr).$$
where $l(x)=x(1\vee \log (1/x))$, in view of Lemma~\ref{lem:boundtau}. 
We now use that $n_a l(u/n_a)\lesssim \log n_a\le \log n$, for $0\le u\le 1$.

Combining the preceding, we conclude that
\begin{equation*}
\eta_{n,1} := \max_e |\LL(e) - Q_B(e)| = \mathcal{O}\left(\frac{\log{n}}{n}\right).
\end{equation*}
Since $Q_B(\widehat e) \geq Q_B(Z)$, by the definition of $\widehat e$, it follows that
$\LL(\widehat e)-\LL(Z)\ge -2\eta_{n,1}$. 
The next step is to replace $\LL$ in this equality by an asymptotic value. 

For $x$ equal to a big multiple of $(\|P\|_\infty^{1/2}\vee n^{-1/2})/n^{1/2}$, the right side of Lemma~\ref{lem:O} tends
to zero and hence $\max_e\bigl\|\wt O(e)-\E\bigl(\wt O(e) \mid  Z\bigr)\bigr\|_\infty/n^2$ is of this order 
in probability. We also have, by Lemma \ref{lem:Oconditionalexpectation}:
 \begin{equation*}
 \max_e \Bigl\| \frac1{n^2}\E\bigl(\wt O(e) \mid Z\bigr) - R(e, Z)P R(e,Z)^T\Bigr\|_\infty 
= \max_e \frac{1}{n}\bigl\|\Diag(R(e,Z)) \diag(P)\bigr\|_\infty \to 0,
 \end{equation*}
 as each entry of $\Diag(R(e,Z))\diag(P)$ is bounded above by one. 
By Lemma~\ref{lem:boundtau}, $\bigl|v\tau(x/v)-v\tau(y/v)\bigr|\le l(|x-y|)$, uniformly in $v\in [0,1]$, where 
$l(x) = x(1 \vee \log(1/x))$. It follows that 
\begin{equation*}
\eta_{n,2}:=\max_{e}\bigl|\LL(e)-L(e)\bigr|=o_P\Bigl(l\Bigl(\frac{\|P\|_\infty^{1/2}\vee n^{-1/2}}{n^{1/2}}\Bigr)\Bigr),
\end{equation*}
for
\begin{equation*}
L(e) = \frac{1}{2}\sum_{a, b} f_a(e)f_b(e)\,\tau\Bigl( \frac{(R(e,Z)PR(e,Z)^T)_{ab}}{f_a(e)f_b(e)} \Bigr).
\end{equation*}
Combining this with the preceding paragraph, we conclude that
$L(\widehat e) \geq L(Z) - 2(\eta_{n,1} + \eta_{n,2})$.

Proof of \ref{thm:weaki}. 
For given $\delta>0$, let $\mathcal{R}_\delta$ be the set of all probability matrices $R$ with 
\begin{equation*}
\min_{P_\sigma} \bigl\|P_\sigma R-\Diag(R^T\1)\bigr\|_1\ge \delta, \qquad\text{ and } \qquad\min_{a: \pi_a>0} (R^T\1)_a\ge \delta.
\end{equation*}
Here the minimum is taken over the (finite) set of all permutation matrices $P_\sigma$ on $K$
labels. Furthermore, set
\begin{equation*} 
\eta :=\inf_{R\in\mathcal{R}_\delta}\bigr[H_P\bigl(\Diag(R^T\1)\bigr)-H_P(R)\bigl],
\end{equation*}
where $H_P$ is as defined in $\eqref{eq:HP}$. Because $\mathcal{R}_\delta$ is compact and the maps $R\mapsto H_P(R)$ and $R\mapsto \Diag(R^T\1)$ are continuous,
the infimum in the display is assumed for some $R\in \mathcal{R}_\delta$. Because no $R\in\mathcal{R}_\delta$ can be 
transformed into a diagonal element by permuting rows and every $R\in \mathcal{R}_\delta$ has 
a nonzero element in every column $a$ with $\pi_a>0$, Lemma~\ref{lem:maximality} shows that  $\eta_n>0$.

Because $L(e) = H_P(R(e,Z))$ for every $e$, and $R(Z,Z) =\Diag(f(Z)) =\Diag(R(\widehat e, Z)^T\1)$, we conclude that
\begin{equation*}
H_P( \Diag(R(\widehat e, Z)^T\1)) - H_P(R(\widehat e, Z) ) \leq 2(\eta_{n,1} + \eta_{n,2}) .
\end{equation*}
If $2(\eta_{n,1} + \eta_{n,2})$ is smaller than $\eta_n$, then it follows that $R(\widehat e, Z)$ cannot be contained in $\mathcal{R}_\delta$. Since $R(\widehat e, Z)^T\1 = f(Z) \stackrel{P}{\to} \pi$, by the law of large numbers, for sufficiently small $\delta > 0$ this must be because $R(\widehat e, Z)$ fails the first requirement defining $\mathcal{R}_\delta$. That is, $\|P_\sigma R(\widehat e, Z) - \Diag(f(Z))\|_1 \leq \delta$ for some permutation matrix $P_\sigma$. As this is true eventually for any $\delta > 0$, it follows that $\min_{P_\sigma} \|P_\sigma R(\widehat e, Z) - \Diag (\pi)\|_1 \stackrel{P}{\to} 0$. 

Proof of \ref{thm:weakii}. In view of Lemma~\ref{lem:maximality2}, the number $\eta=\eta_n$, which now depends on $n$,
 is now bounded below by $\rho_n$ times a positive number
that depends on $(S,\pi)$. The preceding argument goes through provided
$\eta_{n,1} + \eta_{n,2}$ is of smaller order than $\eta_n$. 
This leads to $l\bigl(\sqrt{\rho_n/n}\bigr) + \log(n)/n \ll \rho_n$, or
$(\rho_n/n)\log^2 \bigl(n/(\rho_n\|S\|_\infty)\bigr)\ll \rho_n^2$.
\end{proof}

\begin{lem}
\label{lem:O}
Let $\wt O_{ab}(e) = O_{ab}(e)$ if $a \neq b$, and $\wt O_{ab}(e) = 2O_{ab}(e)$ if $a = b$. For any $x>0$, 
$$\mathbb{P}\Bigl(\max_e\bigl\|\wt O(e)-\E\bigl(\wt O(e) \mid Z\bigr)\bigr\|_\infty> xn^2\Bigr)
\le 2 K^{n+2} e^{-x^2n^2/(8\|P\|_\infty+4x/3)}.$$
\end{lem}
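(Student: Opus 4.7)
The plan is to apply Bernstein's inequality separately to $\wt O_{ab}(e) - \E[\wt O_{ab}(e)\mid Z]$ for each fixed labelling $e\in\mathcal{E}$ and each fixed pair $(a,b)\in\{1,\ldots,K\}^2$, and then take a union bound. Conditional on $Z$, the edges $A_{ij}$ with $i<j$ are independent Bernoulli$(P_{Z_i,Z_j})$ variables, so I first rewrite
\begin{equation*}
\wt O_{ab}(e)-\E\bigl[\wt O_{ab}(e)\mid Z\bigr]
= \sum_{i<j} c_{ij}(a,b,e)\,(A_{ij}-P_{Z_i,Z_j}),
\end{equation*}
where $c_{ij}(a,b,e)=\1_{\{e_i=a,e_j=b\}}+\1_{\{e_i=b,e_j=a\}}\in\{0,1\}$ when $a\neq b$ and $c_{ij}(a,a,e)=2\,\1_{\{e_i=a,e_j=a\}}\in\{0,2\}$ when $a=b$. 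Thus each summand is a zero-mean independent random variable (conditional on $Z$) bounded by $M=2$ in absolute value, with variance at most $c_{ij}^{2}P_{Z_i,Z_j}(1-P_{Z_i,Z_j})\le 4\|P\|_\infty$.

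Summing these bounds over the at most $n^{2}$ pairs $i<j$ (using the loose estimate $\binom{n}{2}\le n^2$) gives $\sigma^2:=\sum_{i<j}\var\bigl(c_{ij}(a,b,e)(A_{ij}-P_{Z_i,Z_j})\mid Z\bigr)\le 4n^{2}\|P\|_\infty$. Bernstein's inequality applied conditionally on $Z$ therefore yields, with $t=xn^{2}$,
\begin{equation*}
\mathbb{P}\Bigl(\bigl|\wt O_{ab}(e)-\E[\wt O_{ab}(e)\mid Z]\bigr|>xn^{2}\,\big|\,Z\Bigr)
\le 2\exp\!\Bigl(-\frac{t^{2}/2}{\sigma^{2}+Mt/3}\Bigr)
\le 2\exp\!\Bigl(-\frac{x^{2}n^{2}}{8\|P\|_\infty+4x/3}\Bigr),
\end{equation*}
uniformly in $Z$ and in the choice of $(a,b,e)$.

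To finish, I take a union bound over the at most $K^{2}$ choices of $(a,b)$ and the $K^{n}$ choices of the labelling $e\in\mathcal{E}$, producing the factor $K^{n+2}$ in front of the exponential. Because the conditional bound is uniform in $Z$, integrating out $Z$ preserves it, giving the unconditional inequality in the statement.

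There is no serious obstacle: the result is a textbook application of Bernstein plus union bound. The only minor points to watch are (i) absorbing the factor of $2$ arising from the $a=b$ coefficients into both $M$ and $\sigma^{2}$ correctly, and (ii) being willing to use the slightly wasteful bound $\binom{n}{2}\le n^{2}$ (rather than $n^{2}/2$) so that the constant in the denominator is the stated $8\|P\|_\infty+4x/3$ rather than a sharper $4\|P\|_\infty+4x/3$; the coarser constant is harmless for the intended application.
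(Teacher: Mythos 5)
Your proof is correct and follows essentially the same route as the paper's: the same decomposition of $\wt O_{ab}(e)-\E[\wt O_{ab}(e)\mid Z]$ into a sum of conditionally independent, mean-zero variables bounded by $2$ with conditional variance at most $4\|P\|_\infty$, followed by Bernstein's inequality and a union bound over the $K^n$ labellings and $K^2$ matrix entries. The paper bounds the number of summands by $n_{ab}(e)\le n^2$ where you use $\binom{n}{2}\le n^2$, but this is an immaterial difference and both yield the stated constants.
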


\begin{proof}
This Lemma is adapted from Lemma 1.1 in \cite{Bickel2009}. There are $K^n$ possible values of $e$ and $\|\cdot\|_\infty$ is the maximum of
the $K^2$ entries in the matrix. We use the union bound to pull these maxima out of
the probability, giving the factor $K^{n+2}$ on the right. Next it suffices to
bound the tail probability of each variable 
\begin{equation*}
\wt O_{ab}(e)-\E\bigl(\wt O_{ab}(e) \mid Z\bigr)=\sum_{i,j}\bigl(A_{ij}-\E(A_{ij} \mid Z)\bigr)(
\1\{e_i=a,e_j=b\} + \1\{e_i = b, e_j = a\}).
\end{equation*}
The  $n_{ab}(e)$ variables in this sum are conditionally independent given $Z$,
take values in $[-2,2]$, 
and have conditional mean zero given $Z$ and conditional variance 
bounded by  $4\var (A_{ij} \mid Z)\le 4P_{Z_iZ_j}(1-P_{Z_iZ_j})\le 4\|P\|_\infty$.
Thus we can apply Bernstein's inequality to find that
\begin{equation*}
\mathbb{P}\Bigl(\bigl|\wt O_{ab}(e)-\E\bigl(\wt O_{ab}(e) \mid Z\bigr)\bigr|>xn^2\Bigr)
\le 2e^{-x^2n^4/(8n_{ab}(e)\|P\|_\infty+4xn^2/3)}.
\end{equation*}
Finally we use the crude bound $n_{ab}(e)\le n^2$ and cancel one factor $n^2$. 
\end{proof}

\begin{lem}\label{lem:Oconditionalexpectation}
Define $\wt O_{ab}(e) = O_{ab}(e)$ if $a \neq b$, and $\wt O_{ab}(e) = 2O_{ab}(e)$ if $a = b$. Then,
for $R(e, Z)$ as defined in \eqref{eq:defR},
\begin{equation*}
\E(\wt O_{ab} \mid Z ) = n^2 R(e, Z) P R(e, Z)^T - n \Diag( R(e,Z) \diag(P)).
\end{equation*}
\end{lem}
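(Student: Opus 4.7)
The plan is a direct first-moment computation, with the only subtlety being careful bookkeeping of the diagonal ($a=b$) versus off-diagonal ($a\neq b$) cases, plus the usual ``$i=j$'' correction since $A_{ii}=0$.

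First I would unify the definitions of $O_{ab}$ and $O_{aa}$. The symmetry $A_{ij}=A_{ji}$ together with $A_{ii}=0$ gives the compact uniform representation
\begin{equation*}
\wt O_{ab}(e) \;=\; \sum_{i\neq j} A_{ij}\,\1\{e_i=a,\,e_j=b\}
\end{equation*}
valid for all $a,b$. For $a\neq b$ this just relabels the two orderings $(i,j)$ and $(j,i)$ that together give $O_{ab}(e)$; for $a=b$ it doubles the sum $\sum_{i<j}$, matching $\wt O_{aa}=2O_{aa}$. Taking conditional expectation, and using $\E(A_{ij}\mid Z)=P_{Z_iZ_j}$ for $i\neq j$,
\begin{equation*}
\E(\wt O_{ab}(e)\mid Z) \;=\; \sum_{i\neq j} P_{Z_iZ_j}\,\1\{e_i=a,\,e_j=b\}.
\end{equation*}

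Next I would add and subtract the diagonal terms ($i=j$) to rewrite this as a clean double sum minus a correction, and then expand $P_{Z_iZ_j}=\sum_{c,d} P_{cd}\1\{Z_i=c,Z_j=d\}$. Collecting the indicators by label and invoking the definition $R_{ac}(e,Z)=n^{-1}\sum_i \1\{e_i=a,Z_i=c\}$,
\begin{equation*}
\sum_{i,j} P_{Z_iZ_j}\,\1\{e_i=a,e_j=b\}
= \sum_{c,d} P_{cd}\Bigl(\sum_i \1\{e_i=a,Z_i=c\}\Bigr)\Bigl(\sum_j \1\{e_j=b,Z_j=d\}\Bigr)
= n^2\bigl(RPR^T\bigr)_{ab},
\end{equation*}
where $R=R(e,Z)$. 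The subtracted diagonal $\sum_i P_{Z_iZ_i}\1\{e_i=a,e_i=b\}$ vanishes unless $a=b$, in which case it equals $\sum_c P_{cc}\cdot nR_{ac}=n\,(R\diag(P))_a$. Packaging this as a diagonal matrix gives the claim
\begin{equation*}
\E(\wt O(e)\mid Z) \;=\; n^2\,R(e,Z)\,P\,R(e,Z)^T \;-\; n\,\Diag\bigl(R(e,Z)\diag(P)\bigr).
\end{equation*}

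There is no real obstacle here; the only thing to watch is that the ``$-n\,\Diag(\cdot)$'' correction term arises purely from the absence of self-loops, and that the factor of $2$ in the definition of $\wt O_{aa}$ is exactly what makes a single uniform formula work for all entries (off-diagonal and diagonal alike). Once the uniform representation is in hand, the rest is a one-line indicator expansion matching the definition of $R(e,Z)$.
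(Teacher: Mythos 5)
Your proof is correct and follows essentially the same route as the paper's: express $\wt O_{ab}(e)$ uniformly as $\sum_{i\neq j}A_{ij}\1\{e_i=a,e_j=b\}$, take conditional expectations, complete the sum to $\sum_{i,j}$, expand $P_{Z_iZ_j}$ over labels to produce $n^2(RPR^T)_{ab}$, and identify the subtracted $i=j$ terms as $\d_{ab}\,n(R\diag(P))_a$. Nothing further is needed.
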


\begin{proof}
A similar expression, not taking into account the absence of self-loops, appears in \cite{Bickel2009}.
\begin{align*}
&\E(\wt O_{ab}(e)  \mid Z = c) = \sum_{i \neq j} P_{c_ic_j}\1\{e_i = a, e_j = b\}\\
&\quad= \sum_{a', b'} P_{a'b'} \sum_{i \neq j} \1\{c_i = a', c_j=b'\}\1\{e_i = a, e_j = b\}\\ 
&\quad= \sum_{a', b'} P_{a'b'} \sum_{i, j} \1\{c_i = a', c_j=b'\}\1\{e_i = a, e_j = b\} - \d_{ab}\sum_{a'} P_{a'a'} \1\{c_i = a'\}\1\{e_i=a\}\\ 
&\quad= n^2 \sum_{a', b'} P_{a'b'} R_{aa'}(e,c)R_{bb'}(e, c) - \d_{ab} n \sum_{a'} P_{a'a'} R_{aa'}(e, c).
\end{align*}
\end{proof}

\begin{lem} \label{lem:boundtau}
The function $\tau: [0,1]\to \mathbb{R}$ satisfies $|\tau(x)-\tau(y)|\le l(|x-y|)$, for
$l(x)= 2x(1\vee \log(1/x))$.
\end{lem}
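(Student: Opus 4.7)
The plan is to exploit the decomposition $\tau(u)=\wt\tau(u)+\wt\tau(1-u)$, where $\wt\tau(u):=u\log u$ (with $\wt\tau(0)=0$). Applying the triangle inequality to the two summands reduces the claim---up to exactly the factor $2$ appearing in $l$---to the one-sided bound
$$\bigl|\wt\tau(a)-\wt\tau(b)\bigr|\le|a-b|\bigl(1\vee\log(1/|a-b|)\bigr),\qquad a,b\in[0,1].$$

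For this one-sided bound I would fix $\delta=b-a>0$ and view $F(a):=\wt\tau(a+\delta)-\wt\tau(a)$ as a function of $a\in[0,1-\delta]$. A direct computation gives $F'(a)=\log((a+\delta)/a)>0$, so $F$ is strictly increasing, and using $\wt\tau(0)=\wt\tau(1)=0$ its range is
$$\bigl[F(0),F(1-\delta)\bigr]=\bigl[\delta\log\delta,\,-(1-\delta)\log(1-\delta)\bigr].$$
Hence
$$\bigl|F(a)\bigr|\le\max\bigl(\delta\log(1/\delta),\,(1-\delta)\log(1/(1-\delta))\bigr).$$

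The remaining, and main, task is to show this maximum is bounded by $\delta(1\vee\log(1/\delta))$ for every $\delta\in[0,1]$. The first argument inside the max is trivially dominated. For the second, if $\delta\ge 1/e$ the right-hand side equals $\delta$ and the bound follows from the global inequality $t\log(1/t)\le 1/e$ on $[0,1]$; if $\delta<1/e$ the right-hand side equals $\delta\log(1/\delta)$, and what is needed is $(1-\delta)\log(1/(1-\delta))\le\delta\log(1/\delta)$ on $[0,1/2]$. This latter inequality, which is the only genuinely non-trivial ingredient, is obtained by analyzing $g(\delta):=\delta\log(1/\delta)-(1-\delta)\log(1/(1-\delta))$: one checks $g(0)=g(1/2)=0$ together with $g'(\delta)=\log(1/(\delta(1-\delta)))-2$, which is positive on $(0,\delta_0)$ and negative on $(\delta_0,1/2)$, where $\delta_0\in(0,1/2)$ is the root of $\delta(1-\delta)=e^{-2}$. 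This forces $g\ge 0$ on $[0,1/2]$.

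Everything else is a one-line monotonicity computation; this final comparison for small $\delta$ is the only technical hurdle. A more brute-force alternative---bounding $\int_x^y|\tau'(u)|\,du$ directly---also works but requires separately handling the cases $y\le 1/2$, $x\ge 1/2$ and $x<1/2<y$ by splitting the integral at $1/2$, so the symmetric decomposition via $\wt\tau$ seems preferable.
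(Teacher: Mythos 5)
Your proof is correct and follows essentially the same route as the paper: both decompose $\tau$ into the two $u\log u$ pieces and use the monotonicity of the derivative $s\mapsto 1+\log s$ to locate the extremal position of the interval $[x,y]$ at an endpoint of $[0,1]$. The only divergence is in the last step: the paper truncates the integral of $1+\log s$ at its sign change $s=e^{-1}$, which yields the two bounds $-(\delta\wedge e^{-1})\log(\delta\wedge e^{-1})$ and $\delta$ immediately and so avoids your comparison $(1-\delta)\log(1/(1-\delta))\le\delta\log(1/\delta)$ on $[0,1/2]$; your analysis of $g(\delta)$ that supplies this comparison is nevertheless correct, just slightly more work than necessary.
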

\begin{proof}
Write the difference between $x\log{x}$ and $y\log{y}$ as $|\int_x^y(1+\log s)\,ds|$. The
  function $s\mapsto 1 + \log s$ is strictly increasing on $[0,1]$ from $-\infty$ to 1 and changes sign at
  $s=e^{-1}$. Therefore the absolute integral is bounded above by the maximum of 
\begin{align*}
-\int_0^{|x-y|\wedge  e^{-1}}(1+\log s)\,ds&=-(|x-y|\wedge e^{-1})\log {|x-y|\wedge e^{-1}}\\
\noalign{\noindent \textrm{and}}
\int_{1-|x-y|\vee  e^{-1}}^1(1+\log s)\,ds&\le |x-y|.
\end{align*}
\end{proof}

\noindent\textbf{Proof of Lemma \ref{lem:bayesstirling}}

\begin{proof}
The second assertion of the lemma follows from the first and the fact that $\max _e Q_P(e)\lesssim (\log n)/n$.
It suffices to prove the first assertion.

Recall that the Bayesian modularity is given by 
\begin{equation}\label{eq:Bmodrecall}
n^2 Q_B(e) = \sum_{a \leq b} \log B\left( O_{ab}(e) + \tfrac{1}{2},  n_{ab}(e) -  O_{ab}(e) + \tfrac{1}{2}\right) 
+ \sum_a \log \Gamma(n_a(e) + \alpha).
\end{equation} 
We shall show that the first sum on the right is equivalent to $Q_{ML}(e)$, and the second sum 
is equivalent to $Q_P(e)$. We show this by comparing
the sums defining the various modularities term by term. 
For clarity we shall suppress the argument $e$. We will repeatedly use the following bound from \citep{Robbins1955}: for $n \in \mathbb{N}_{\geq 1}$,
\begin{equation}\label{eq:stirling}
\Gamma(n+1) =  \sqrt{2\pi}n^{n+1/2}e^{-n}e^{a_n},
\end{equation}
with $(12n+1)^{-1} \leq a_n \leq(12n)^{-1}$, as well as the fact that $\Gamma(s)$ is monotone increasing for $s \geq 3/2$. In addition, we will bound remainder terms by using the inequality $x\log((x+c)/x) \leq c$ for $c \geq 0$ and the fact that $x\log((x-1)/x)$ is bounded for $x > 1$.

\textbf{First sum of \eqref{eq:Bmodrecall}.}\\
\emph{Upper bound, case 1: $O_{ab} \neq 0$ and $n_{ab} \neq O_{ab}$}\\
We apply \eqref{eq:stirling}:
\begin{align*}
\log &B(O_{ab} + \beta_1, n_{ab} - O_{ab} + \beta_2) 
\leq \log \frac{\Gamma(O_{ab} + \floor{\beta_1} + 1)\Gamma(n_{ab}-O_{ab} + \floor{\beta_2}+1) }{\Gamma(n_{ab} + \floor{\beta_1+\beta_2})}\\
&= O_{ab}\log\left(\frac{O_{ab}+\floor{\beta_1}}{n_{ab} + \floor{\beta_1+\beta_2} - 1} \right)
+ (n_{ab} - O_{ab})\log\left(\frac{n_{ab}-O_{ab}+\floor{\beta_2}}{n_{ab}+\floor{\beta_1+\beta_2}-1} \right)\\
&\quad + (\floor{\beta_1}+1/2)\log(O_{ab}+\floor{\beta_1}) + (\floor{\beta_2}+1/2)\log(n_{ab}-O_{ab}+\floor{\beta_2})\\ 
&\quad - (\floor{\beta_1+\beta_2}-1/2)\log(n_{ab}+\floor{\beta_1+\beta_2}-1) 
+ \log\sqrt{2\pi}  - \floor{\beta_1}  - \floor{\beta_2}+\floor{\beta_1+\beta_2}-1\\
&\quad + \alpha_{ab} + \beta_{ab} - \gamma_{ab} ,
\end{align*}
where $\alpha_{ab}, \beta_{ab}$ and $\gamma_{ab}$ are bounded by constants. By the inequality $x\log( (x+c)/x) \leq c$ for $c \geq 0$, and the fact that $x\log((x-1)/x)$ is bounded for $x > 1$, we find the upper bound:
$$\log B(O_{ab} + \beta_1, n_{ab} - O_{ab} + \beta_2) 
\leq n_{ab}\tau\left(\frac{O_{ab}}{n_{ab}}\right) + O(\log{n_{ab}}). $$

\emph{Upper bound, case 2: $n_{ab} = 1$ and $O_{ab} = 0$ or $n_{ab} = O_{ab}$, or $n_{ab} = 0$}\\
In both cases, the corresponding term of the likelihood modularity vanishes, whereas the contribution of the Bayesian modularity is either $\log B(1+\beta_1, \beta_2)$, $\log(\beta_1, 1 + \beta_2)$, or $\log B(\beta_1, \beta_2)$. 

\emph{Upper bound, case 3: $n_{ab} \geq 2$ and $O_{ab} = 0$ or $n_{ab} = O_{ab}$}\\
Again, the corresponding term of the likelihood modularity vanishes. We show the computations for the case $n_{ab} = O_{ab}$; for the case $O_{ab} = 0$, switch $\beta_1$ and $\beta_2$. By \eqref{eq:stirling}:
\begin{align*}
\log &B(O_{ab} + \beta_1, n_{ab} - O_{ab} + \beta_2) 
= \log B(n_{ab} + \beta_1, \beta_2) \leq \log \frac{\Gamma(n_{ab} + \floor{\beta_1}+1)\Gamma(\beta_2)}{\Gamma(n_{ab}+\floor{\beta_1+\beta_2})}\\
&= (n_{ab} + \floor{\beta_1})\log\left(\frac{n_{ab}+\floor{\beta_1}}{n_{ab} + \floor{\beta_1 + \beta_2}} \right)
+ (1/2)\log(n_{ab}+\floor{\beta_1})\\ 
&\quad - (\floor{\beta_1+\beta_2}+1/2)\log(n_{ab}+\floor{\beta_1+\beta_2}) + \log \Gamma(\beta_2)+\floor{\beta_1+\beta_2}-1+\delta_{ab}-\epsilon_{ab},
\end{align*}
where $\delta_{ab}$ and $\epsilon_{ab}$ are bounded by constants. Arguing as before, the first term is bounded, while the remainder is of order $\log(n_{ab})$. A lower bound is found analogously. \\ 

\emph{Lower bound}
The computations for the lower bound are completely analogous, except that we require $O_{ab} + \beta_1 \geq 2$ and $n_{ab} - O_{ab} + \beta_2 \geq 2$. We study four cases. The cases (1) $O_{ab} \geq 2$ and $n_{ab} - O_{ab} \geq 2$, (2) $n_{ab} = 0$ and (3) $n_{ab} > 0$ and $n_{ab} = O_{ab}$ or $O_{ab} = 0$ are similar to cases 1, 2 and 3 respectively of the upper bound. The fourth case is $n_{ab}-O_{ab} = 1$ and $O_{ab} \geq 2$, or $O_{ab} = 1$ and $n_{ab}-O_{ab} \geq 1$. In both instances, the likelihood modularity is equality to a bounded term minus $\log{n_{ab}}$. By similar calculations as before, the Bayesian modularity is of the order $\log{n_{ab}}$ as well.\\

\emph{Conclusion}
We find:
\begin{equation*}
\sum_{a\leq b}\log B(O_{ab} + \beta_1, n_{ab} - O_{ab} + \beta_2) = \sum_{a \leq b} n_{ab}\tau\left(\frac{O_{ab}}{n_{ab}}\right) + O(\log{n}).
\end{equation*}

\textbf{Second sum of \eqref{eq:Bmodrecall}.}\\
We consider three cases. If $n_a + \floor{\alpha} = 0$, then $\alpha > 0$, implies $n_a = 0$, in which case $\log \Gamma(n_a + \alpha) = \log \Gamma(\alpha)$, which is bounded. In case $n_a + \floor{\alpha} = 1$, the term $\log \Gamma(n_a + \alpha)$ is equal to either $\log \Gamma(1 + \alpha)$ or $\log \Gamma(\alpha)$ and thus bounded as well. For the case $n_a + \floor{\alpha} \geq 2$, we study the upper bound $ \Gamma(n_a + \alpha) \leq \Gamma(n_a + \floor{\alpha} + 1)$ and the lower bound $\Gamma(n_a + \alpha) \geq \Gamma(n_a + \floor{\alpha})$. By applying \eqref{eq:stirling} in both cases, we conclude: 

$$\sum_a \log \Gamma(n_a + \alpha) = \sum_{a: n_a + \floor{\alpha} \geq 2} n_a \log n_a - n + O(\log n). $$

\end{proof}

\begin{lem}
\label{lem:maximality}
For any probability matrix $R$,
\begin{equation}
\label{EqDiagRBigger}
H_P(R)\le H_P(\Diag(R^T\1)\bigr).
\end{equation}
Furthermore, if $(P,\pi)$ is identifiable and the columns of $R$ corresponding to positive
coordinates of $\pi$ are not identically zero,  then the inequality is strict unless $P_\sigma R$ is a diagonal matrix
for some permutation matrix $P_\sigma$. 
\end{lem}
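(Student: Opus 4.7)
My plan is to derive \eqref{EqDiagRBigger} by applying Jensen's inequality to each $(a,b)$-term of $H_P(R)$, and then use the strict convexity of $\tau$ together with the identifiability of $(P,\pi)$ to handle the equality case.

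For the main inequality, fix indices $(a,b)$ with $f_a:=(R\1)_a>0$ and $f_b:=(R\1)_b>0$. The quantities $w_{a'b'}:=R_{aa'}R_{bb'}/(f_af_b)$ form a probability distribution on pairs $(a',b')$, and
$$\frac{(RPR^T)_{ab}}{f_a f_b}=\sum_{a',b'} w_{a'b'}\,P_{a'b'}$$
is a convex combination of entries of $P$. Since $\tau$ is convex on $[0,1]$, Jensen's inequality gives $f_af_b\,\tau\bigl((RPR^T)_{ab}/(f_af_b)\bigr)\le\sum_{a',b'}R_{aa'}R_{bb'}\,\tau(P_{a'b'})$. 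Summing over all $(a,b)$ (terms with $f_af_b=0$ vanishing by the standard convention), exchanging the order of summation, and using $\sum_a R_{aa'}=(R^T\1)_{a'}$, the right-hand side collapses to $\sum_{a',b'}(R^T\1)_{a'}(R^T\1)_{b'}\tau(P_{a'b'})$; after the factor $\tfrac12$ this is exactly $H_P(\Diag(R^T\1))$.

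For the equality case, note that $\tau$ is strictly convex on $[0,1]$, so equality in the termwise Jensen step for each $(a,b)$ with $f_a,f_b>0$ forces all positive-weight values $\{P_{a'b'}:R_{aa'}R_{bb'}>0\}$ to coincide. Writing $S_a:=\{a':R_{aa'}>0\}$ and varying $a'$ over $S_a$ with $b'$ held fixed, this means the rows of $P$ indexed by $S_a$ must agree on every column $b'$ in $C:=\bigcup_{b:f_b>0}S_b$. The non-vanishing column hypothesis yields $A:=\{a:\pi_a>0\}\subseteq C$, while identifiability says that distinct rows of $P$ remain distinct when restricted to $A$. Hence $|S_a|=1$ for each $a$ with $f_a>0$; letting $\sigma(a)$ denote this unique element, every nonzero row of $R$ equals $f_a\,e_{\sigma(a)}^T$. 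The non-vanishing column condition forces $\sigma(B)\supseteq A$ for $B=\{a:f_a>0\}$, and in the regime $\pi>0$ used in Theorem \ref{thm:strongconsistency} this surjection $\sigma:B\subseteq\{1,\ldots,K\}\to\{1,\ldots,K\}$ must be a bijection; the permutation matrix $P_\sigma$ induced by $\sigma^{-1}$ then satisfies $P_\sigma R=\Diag(R^T\1)$.

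The main obstacle is the second step: translating a pointwise equality condition inherited from strict convexity into the global structural assertion that each row of $R$ is supported on a single column. Identifiability is the essential lever here, since it prevents two rows of $P$ from agreeing on $A$ unless they are in fact equal, and hence forces the \emph{a priori} possible row-supports $S_a$ to collapse to singletons.
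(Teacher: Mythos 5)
Your inequality argument is correct and is in essence the same as the paper's: the paper applies the information (Gibbs) inequality $p\log q+(1-p)\log(1-q)\le\tau(p)$ termwise with weights $R_{aa'}R_{bb'}$, which, once summed over $(a',b')$, is exactly your Jensen step for the convex function $\tau$, and the two equality criteria (all positive-weight $P_{a'b'}$ coincide) agree as well. Where you genuinely diverge is the equality case. The paper first passes to a permutation maximizing the number of positive diagonal entries of $P_\sigma R$ and then runs a case analysis on the index sets $I,I^c$ of zero and nonzero diagonal entries, matching columns in $I$ to rows in $I^c$ so as to manufacture two equal rows of $P$. Your route --- show directly that each row support $S_a$ is a singleton, since otherwise two rows of $P$ would agree on every column of $C\supseteq A$, and then read off the permutation --- is shorter and avoids the diagonal-maximization device entirely. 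I checked the individual steps (constancy of $P$ on $S_a\times S_b$, the inclusion $A\subseteq C$ from the nonvanishing-column hypothesis, injectivity of $\sigma$ by pigeonhole when $\pi>0$, and $P_{\sigma^{-1}}R=\Diag(R^T\1)$) and they are all sound.

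The one gap relative to the lemma as stated is the case where $\pi$ has zero coordinates, which you explicitly set aside. There your argument still yields that every nonzero row of $R$ equals $f_a e_{\sigma(a)}^T$, but $\sigma$ need not be injective, and in fact the stated conclusion can then fail: for $K=3$, $\pi=(\pi_1,\pi_2,0)$ with $\pi_1,\pi_2>0$, and $R$ with rows $(1/3,0,0)$, $(1/3,0,0)$, $(0,1/3,0)$, the hypotheses of the lemma hold, yet one computes $H_P(R)=H_P(\Diag(R^T\1))=\tfrac1{18}\bigl(4\tau(P_{11})+4\tau(P_{12})+\tau(P_{22})\bigr)$ while no row permutation of $R$ is diagonal. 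So the defect lies in the lemma's statement (and in the paper's own sketchy treatment of the $\pi_a=0$ subcase) rather than in your argument; since the consistency theorems assume $\pi>0$, the version you prove is the one actually used. To match the stated generality one would have to weaken the conclusion to ``every row of $R$ is supported on a single column'' rather than ``$P_\sigma R$ is diagonal.''
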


\begin{proof}
This Lemma is related to the proof that the likelihood modularity is consistent given in \cite{Bickel2009}. This proof however rests on their incorrect Lemma 3.1, and thus we provide full details on how the argument can be adapted to avoid the use of their Lemma 3.1 altogether.

For $R$ a diagonal matrix the numbers $(RPR^T)_{ab}/(R\1)_a(R\1)_b$ reduce to
$P_{ab}$. Consequently, by the definition of $H_P$,
\begin{equation}
\label{EqHforDiag}
H_P\bigl(\Diag(f)\bigr)=\sum_{a , b}f_af_b\, \tau(P_{ab}).
\end{equation}
For a general matrix $R$, by inserting the definition of $\tau$,
\begin{align*}
H_P(R)&=\sum_{a , b}(RPR^T)_{ab}\log\frac{(RPR^T)_{ab}}{(R\1)_a(R\1)_b}\\
&\qquad\qquad+\sum_{a , b}\bigl((R\1)_a(R\1)_b-(RPR^T)_{ab}\bigr)\log\Bigl(1-\frac{(RPR^T)_{ab}}{(R\1)_a(R\1)_b}\Bigr).
\end{align*}
Because $(R\1)_a(R\1)_b-(RPR^T)_{ab}=(R(1-P)R^T)_{ab}$, with $1$ the $(K\times K)$-matrix with all
coordinates equal to 1, we can rewrite this as 
\begin{align*}
&\sum_{a , b}\sum_{a',b'}R_{aa'}R_{bb'}\biggl[P_{a'b'}\log\frac{(RPR^T)_{ab}}{(R\1)_a(R\1)_b}
+(1-P_{a'b'})\log\Bigl(1-\frac{(RPR^T)_{ab}}{(R\1)_a(R\1)_b}\Bigr)\biggr].
\end{align*}
By the information inequality for two-point measures, the expressions in square brackets 
becomes bigger when $(RPR^T)_{ab}/(R\1)_a(R\1)_b$ is replaced by $P_{a'b'}$, with a strict increase
unless these two numbers are equal. After making this substitution the terms in square brackets
becomes $\tau(P_{a'b'})$, and we can exchange the order of the
two (double) sums and perform the sum on $(a,b)$ to write the resulting expression as
$$\sum_{a',b'}(R^T\1)_{a'}(R^T\1)_{b'}\tau(P_{a'b'})=H_P\bigl(\Diag(R^T\1)\bigr).$$
This proves the first assertion \eqref{EqDiagRBigger} of the lemma.

If $R$ attains equality, then also for every permutation matrix $P_\sigma$, by the equality
$H_P(P_\sigma R)= H_P(R)$
and the fact that $(P_\sigma R)^T\1=R^T\1$, we have
\begin{equation}
\label{EqHulp}
H_P(P_\sigma R)=H_P\bigl(\Diag((P_\sigma R)^T\1)\bigr).
\end{equation}
We shall show that if $R$ satisfies this equality and $P_\sigma R$ has a positive diagonal,
then $P_\sigma R$ is in fact diagonal. Furthermore, we shall show that there exists $P_\sigma$ such
that $P_\sigma R$ has a positive diagonal.

Fix some $(P_\sigma)_m$ that maximizes the number of positive diagonal elements of $P_\sigma R$ over all
permutation matrices $P_\sigma$, and denote $\bar R=(P_\sigma)_mR$. Because the information inequality is
strict, the preceding argument shows that \eqref{EqHulp} can be true for $P_\sigma=(P_\sigma)_m$ (giving $P_\sigma R=\bar
R$) only if 
\begin{equation}
\label{EqNoLossR}
P_{a'b'}=\frac{(\bar RP\bar R^T)_{ab}}{(\bar R\1)_a(\bar R\1)_b}, \qquad \text{whenever
} \bar R_{aa'}\bar R_{bb'}>0.
\end{equation}
Denote the matrix on the right of the equality by $Q$. 

If $\bar R$ has a completely positive
diagonal, then we can choose $a=a'$ and $b=b'$ and find from
equation \eqref{EqNoLossR}, that $P_{ab}=Q_{ab}$, for every $a,b$.
If also $\bar R_{aa'}>0$, then we can also choose $b=b'$ and find that $P_{a'b}=Q_{ab}$, for every $b$.
Thus the $a$th and $a'$th rows of $P$ are identical. Since all rows of $P$
are different by assumption, it follows that no  $a\not=a'$ with $\bar R_{aa'}>0$ exists.

If $\bar R$ does not have a fully positive diagonal, then the submatrix of $\bar R$ obtained by
deleting the rows and columns corresponding to positive diagonal elements must be the zero matrix,
since otherwise we might permute the remaining rows and create an additional nonzero diagonal
element, contradicting that $(P_\sigma)_m$ already maximized this number.
If $I$ and $I^c$ are the sets of indices of zero and nonzero diagonal elements, then 
the preceding observation is that $\bar R_{ij}$ is  zero for every $i,j\in I$. If $\pi>0$, then we
need to consider only $R$ with nonzero columns. For  $i\in I$ a nonzero element in the $i$th column of $\bar R$
must be located in the rows with label in $I^c$: for every $i\in I$ there exists $k_i\in I^c$ with $\bar
R_{k_ii}>0$.
Then, for $i,j\in I$,
\begin{itemize}
\item[(1)] for $a=k_i$, $b=k_j$, $a'=i$, $b'=j$, equation \eqref{EqNoLossR} implies $Q_{k_ik_j}=P_{ij}$.
\item[(2)] for $a=k_i$, $b\in I^c$, $a'=i$, $b'=b$, equation \eqref{EqNoLossR} implies
  $Q_{k_ib}=P_{ib}$.
\item[(3)] for $a=k_i$, $b\in I^c$, $a'=k_i$, $b'=b$, equation \eqref{EqNoLossR} implies $Q_{k_ib}=P_{k_ib}$.
\end{itemize}
We combine these three assertions to conclude that, for $a,i\in I$ and $b\in I^c$,
\begin{align*}
P_{ai}&=P_{ia}\stackrel{(1)}{=}Q_{k_ik_a}\stackrel{(2)}{=}P_{ik_a}=P_{k_ai},\\
P_{ab}&\stackrel{(2)}{=}Q_{k_ab}\stackrel{(3)}=P_{k_ab}.
\end{align*}
Together these imply that the $a$th and the $k_a$th row of $P$ are equal. Since by assumption
they are not (if $\pi>0$), this case can actually not exist (i.e.\ $k=0$). 

Finally if $\pi_a=0$ for some $a$, then we follow the same argument, but we match only
every column $i\in I$ with $\pi_i>0$ to a row $k_i\in I^c$. By the assumption on $R$ such $k_i$
exist, and the construction results in two rows of $P$ that are identical in the coordinates with $\pi_a>0$.\end{proof}

\begin{lem}
\label{lem:maximality2}
For any fixed $(K\times K)$-matrix $P$ with elements in $[0,1]$, uniformly in probability matrices $R$, as $\rho_n\ra0$,
\begin{equation}
\label{EqApproximateHbyG}
\frac{1}{\rho_n}\Bigl( H_{\rho_nP}(\Diag(R^T\1)\bigr)-H_{\rho_n P}(R)\Bigr)\ra G_P(\Diag(R^T\1)\bigr)-G_P(R).
\end{equation}
Furthermore, if $(P,\pi)$ is identifiable and the columns of $R$ corresponding to positive
coordinates of $\pi$ are not identically zero,  then the right side is strictly positive unless $SR$ is a diagonal matrix
for some permutation matrix $S$.
\end{lem}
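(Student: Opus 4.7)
The plan is to split the lemma into two independent parts: first an asymptotic expansion identifying the limit of the rescaled $H$-differences, then a convexity-and-identifiability argument showing the limit is strictly positive away from permuted diagonal matrices.

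For the asymptotic part, I would expand $\tau(u)=u\log u+(1-u)\log(1-u)$ near $u=0$. Using $(1-u)\log(1-u)=-u+u^2/2+O(u^3)$ gives $\tau(u)=u\log u-u+O(u^2)=\tau_0(u)+O(u^2)$, and therefore, uniformly over $x$ in a bounded set,
\begin{equation*}
\frac{\tau(\rho_n x)}{\rho_n}=x\log\rho_n+\tau_0(x)+O(\rho_n).
\end{equation*}
Plugging the entries $x_{ab}=(RPR^T)_{ab}/((R\1)_a(R\1)_b)\in[0,\|P\|_\infty]$ (with the convention that the whole summand vanishes when $(R\1)_a(R\1)_b=0$) into the definition of $H_{\rho_nP}(R)$ yields
\begin{equation*}
\frac{H_{\rho_nP}(R)}{\rho_n}=\frac{\log\rho_n}{2}\sum_{a,b}(RPR^T)_{ab}+G_P(R)+O(\rho_n),
\end{equation*}
uniformly in probability matrices $R$. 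The key observation for step~(i) is that $\sum_{a,b}(RPR^T)_{ab}=\1^TRPR^T\1=(R^T\1)^TP(R^T\1)$ depends on $R$ only through the column-sum vector $R^T\1$. Consequently the $\log\rho_n$-term is identical for $R$ and for $\Diag(R^T\1)$ (whose column-sum vector is again $R^T\1$) and cancels in the difference, leaving exactly $G_P(\Diag(R^T\1))-G_P(R)+O(\rho_n)$.

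For the strict-positivity part, I would mimic the structure of the proof of Lemma~\ref{lem:maximality}, replacing the two-point information inequality by Jensen's inequality applied to the strictly convex function $\tau_0$ on $(0,1]$. Writing $y_{ab}=(RPR^T)_{ab}/((R\1)_a(R\1)_b)$ as the weighted average of the $P_{a'b'}$ with weights $R_{aa'}R_{bb'}/((R\1)_a(R\1)_b)$, Jensen gives
\begin{equation*}
(R\1)_a(R\1)_b\tau_0(y_{ab})\le\sum_{a',b'}R_{aa'}R_{bb'}\tau_0(P_{a'b'}),
\end{equation*}
and summing over $a,b$ and using $\sum_{a,b}R_{aa'}R_{bb'}=(R^T\1)_{a'}(R^T\1)_{b'}$ yields $G_P(R)\le G_P(\Diag(R^T\1))$. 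Strict convexity of $\tau_0$ ensures equality only when, for every $(a,b)$, the value $P_{a'b'}$ is constant in $(a',b')$ over the support of the weights, i.e.\ $P_{a'b'}=y_{ab}$ whenever $R_{aa'}R_{bb'}>0$.

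Finally, to promote this algebraic equality into ``$SR$ is diagonal'', I would reuse verbatim the second half of the proof of Lemma~\ref{lem:maximality}: choose a permutation $S$ maximizing the number of strictly positive diagonal entries of $SR$, and with $\bar R=SR$ analyze the two cases (fully positive diagonal vs.\ a nontrivial index set $I$ of zero diagonal entries, each column $i\in I$ with $\pi_i>0$ being connected to some row $k_i\in I^c$ by the assumption that the relevant columns of $R$ are nonzero). The same three substitutions into $P_{a'b'}=y_{ab}$ force two distinct rows of $P$ to agree (on coordinates with $\pi>0$), contradicting identifiability of $(P,\pi)$. I expect this combinatorial step to be the main obstacle, because one must be careful that the bookkeeping with zero columns of $R$ and zero coordinates of $\pi$ transfers cleanly from $\tau$ to $\tau_0$; but since the only property used was the strictness of the underlying inequality (Gibbs there, Jensen here), the argument should go through essentially unchanged.
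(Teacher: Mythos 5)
Your proposal is correct and follows essentially the same route as the paper: the same uniform expansion $\rho_n^{-1}\tau(\rho_n u)=u\log\rho_n+\tau_0(u)+O(\rho_n)$ with cancellation of the $\log\rho_n$ term via $(R^T\1)^TP(R^T\1)$, and the same reduction of strict positivity to the combinatorial argument of Lemma~\ref{lem:maximality}. Your use of Jensen's inequality for the strictly convex $\tau_0$ is just the supporting-line form of the paper's ``information inequality for two Poisson distributions,'' with the identical equality condition.
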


\begin{proof}
From the fact that $|(1-u)\log (1-u)+u|\le u^2$, for $0\le u\le 1$, it can be verified that,
$\bigl|\rho_n^{-1}\tau(\rho_n u) - \bigl(u\log \rho_n +\tau_0(u)\bigr)\bigr|\le \rho_n\ra 0$, uniformly in $0\le u\le 1$. It follows that, uniformly in $R$,
$$\frac{1}{\rho_n} H_{\rho_n P}(R) =\log\rho_n\sum_{a,b}(RPR^T)_{ab}+
\sum_{a,b}(R\1)_a(R\1)_b\tau_0\Bigl(\frac{(RPR^T)_{ab}}{(R\1)_a(R\1)_b}\Bigr)+O(\rho_n).$$
The first term on the right is equal to $\log \rho_n(R^T\1)^TP(R^T\1)$, and hence is the same for $R$ and $\Diag(R^T\1)$.
Thus this term cancels on taking the difference to form the left side of \eqref{EqApproximateHbyG},
and hence \eqref{EqApproximateHbyG} follows.

The right side of \eqref{EqApproximateHbyG} is nonnegative, because the left side is, by
Lemma~\ref{lem:maximality}. This fact can also be proved directly along the lines
of the proof of Lemma~\ref{lem:maximality}, as follows. Write
$$G_P(R)=\sum_{a,b}\sum_{a',b'}R_{aa'}R_{bb'}\Bigl[P_{a'b'}\log
\frac{(RPR^T)_{ab}}{(R\1)_a(R\1)_b}- \frac{(RPR^T)_{ab}}{(R\1)_a(R\1)_b}\Bigr].$$
By the information inequality for two Poisson distributions 
the term in square brackets becomes bigger if  $(RPR^T)_{ab}/(R\1)_a(R\1)_b$ is replaced by
$P_{a'b'}$. It then becomes $\tau_0(P_{a'b'})$ and the double sum on $(a,b)$ can be executed
to see that the resulting bound is $G_P\bigl(\Diag(R^T\1)\bigr)$.
Furthermore, the inequality is strictly unless \eqref{EqNoLossR} holds, with $\bar R=R$. 
Since also $G_P(P_\sigma R)=G_P(R)$, for every permutation matrix $P_\sigma$, the final assertion of the
lemma is proved by copying the proof of Lemma~\ref{lem:maximality}.
\end{proof}

\subsection{Strong consistency}
We need slightly adapted versions of the function $H_P$, given by,
with $\d_{ab}$ equal to 1 or 0 if $a=b$ or not,
\begin{align}\label{eq:HPn}
H_{P,n}(R)&=\frac{1}{2}\sum_{a, b}(R\1)_a\bigl((R\1)_b-\d_{ab}/n\bigr)\, \tau\Bigl(\frac{(RPR^T)_{ab}-\d_{ab}  \sum_{k} P_{kk} R_{ka}/n
}{(R\1)_a\bigl((R\1)_b-\d_{ab} /n\bigr)}\Bigr).
\end{align}
For given functions $t_{ab}: [0,1]\to\mathbb{R}$, let $X(e)$ be the $K\times K$ matrix with entries
\begin{equation}
\label{EqDefX}
X_{ab}(e)=t_{ab}\Bigl(\frac{\wt O_{ab}(e)}{n^2}\Bigr)-t_{ab}\Bigl(\frac{\E(\wt O_{ab}(e)\mid Z)}{n^2}\Bigr).
\end{equation}

\noindent\textbf{Proof of Theorem \ref{thm:strongconsistency} [strong consistency]}
\begin{proof}
\ref{thm:strongi}. By Theorem~\ref{thm:weakconsistency}, $\widehat e$ is weakly consistent, and hence
with probability tending to one it belongs to the set
of classifications $e$ such that the fractions $f(e)$ are close to $\pi$, and the matrices $R(e,Z)$ are close to
$\Diag(\pi)$ after the appropriate permutation of the labels (that is, of rows of $R(e,Z)$). 
Therefore, it is no loss of generality to assume that $\widehat e$ is restricted
to this set. By Lemmas~\ref{lem:O} and~\ref{lem:Oconditionalexpectation}, the matrices $\wt O(e)/n^2$ are then close to 
$R(e,Z)PR(e,Z)^T\ra \Diag(\pi)P\Diag(\pi)$, 
and hence are bounded away from zero and one if $P$ has this property.

If $\widehat e$ and $Z$ differ at $m$ nodes, then $\widehat e$ belongs to the set of $e$ with 
$\|R(Z,Z)-R(e,Z)\|_1=m(2/n)$, by Lemma~\ref{lem:Rdiscrepancy}. In that case 
$Q_B(e)\ge Q_B(Z)$, for some $e$ in this set, and hence by Lemma~\ref{lem:bayesstirling} 
$Q_{ML}(e)-Q_{ML}(Z)+Q_P(e)-Q_P(Z)\ge -\eta_n$, for some $\eta_n$ of order $(\log n)/n^2$. 
It follows that:
\begin{align}\nonumber
&\bigl[Q_{ML}(e)-H_{P,n}\bigl(R(e,Z)\bigr)\bigr]-\bigl[Q_{ML}(Z)-H_{P,n}\bigl(R(Z,Z)\bigr)\bigr]\\
&\qquad\qquad \ge H_{P,n}\bigl(R(Z,Z)\bigr)-H_{P,n}\bigl(R(e,Z)\bigr) -|Q_P(e)-Q_P(Z)|- \eta_n.
\label{EqLBoundModulus}
\end{align}
The first term on the right is bounded below by a multiple of $m/n$,
by Lemmas ~\ref{LemmaStrongMaximality} and \ref{lem:Rdiscrepancy}. Because $(x+\alpha)\log x-(y+\alpha)\log y=\int_x^y(\log s+(s+\alpha)/s)\,ds$
is bounded in absolute value by a multiple of $|x-y|\log (x\vee y)$, if $\alpha\ge0$ and $x,y>0$,
the second term $-|Q_P(e)-Q_P(Z)|$ is bounded below by a multiple of $m(\log n)/n^2$, for some positive constant $C_2$,
which is of smaller order than $m/n$.
We conclude that the left side of \eqref{EqLBoundModulus} is bounded below by
$C_1m/n$. The left side is $\sum_{a,b}\bigl(X_{ab}(e)-X_{ab}(Z)\bigr)$, for $X$ defined
in \eqref{EqDefX} and $t$ the function with coordinates
$t_{ab}(o)=f_a(e) \bigl(f_b(e)-\d_{ab}/n\bigr)\tau\bigl(o/f_a(e) \bigl(f_b(e)-\d_{ab}/n\bigr) \bigr)$.
Because we restrict $e$ to classifications such that $O_{ab}(e)/n_{ab}(e)$ and $f_a(e)f_b(e)$ are bounded away
from zero and one, only the values of the function $\tau$ on an open interval strictly within
$(0,1)$ matter. On any such interval $\tau$ has uniformly bounded derivatives, and hence 
the  bound of  Lemma~\ref{LemmaXmod} is valid. Thus we find that 
\begin{align*}
\Pr\bigl(\#(i: \widehat e_i\not= Z_i)=m\bigr)
& \le\Pr\Bigl(\sup_{e: \#(i: e_i\not= Z_i)\le m}\bigl\| X(e)-X(Z)\bigr\|_\infty\ge\frac{C_1m}n\Bigr)\\
&\lesssim K^m\binom{n}{m} e^{-cm^2/(m\|P\|_\infty/n+m/n)}\\
&\le e^{m\log(Kne/m)-c_1mn}.
\end{align*}
The sum of the right side over $m=1,\ldots,n$ tends to zero.

\ref{thm:strongii}. We follow the proof for \ref{thm:strongi}, but in \eqref{EqLBoundModulus} 
use that $H_{P,n}\bigl(R(Z,Z)\bigr)-H_{P,n}\bigl(R(e,Z)\bigr)\ge \rho_n C \|R(Z,Z)-R(e,Z)\|_1\ge \rho_n C2m/n$,
by Lemma~\ref{LemmaStrongMaximalitySmallRho}. Since $\rho_n \gg (\log n)/n$ by assumption,
we have that the contribution $m(\log n)/n^2$ of $Q_P(e)-Q_P(Z)$  is still negligible and hence
$\rho_n C2m/n$ is a lower bound for the left side of \eqref{EqLBoundModulus}.
As a bound on the left side of the preceding display, we then obtain
$$\sum_{m=1}^n K^m\binom{n}{m}e^{-c_2 \rho_n^2m^2/(m\rho_n/n+\rho_n m/n)}
\le \sum_{m=1}^ne^{m\log (Kne/m)-c_3\rho_n mn}.$$
This sum tends to zero provided that $n\rho_n \gg \log n$.
\end{proof}

\begin{lem}
\label{LemmaStrongMaximality}
If $P$ is fixed and symmetric and every pair of rows of $P$ is different and $0<P<1$ and $\pi>0$, then, for sufficiently
small $\delta>0$,
\begin{align}
\label{EqBCLinearAssumption}
\liminf_{n\ra\infty}\inf_{0<\|R-\Diag(\pi)\|<\delta}\frac{H_{P,n}\bigl(\Diag(R^T\1)\bigr)-H_{P,n}(R)}{\|\Diag(R^T\1)-R\|}>0.
\end{align}
\end{lem}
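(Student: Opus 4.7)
The plan is to reduce from $H_{P,n}$ to its limit $H_P$ and then to perform a first-order Taylor expansion of $H_P(\Diag(R^T\1)) - H_P(R)$ around $\Diag(\pi)$. The linear coefficient turns out to be a strictly positive combination of Bernoulli KL divergences, the positivity following from identifiability.

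For $R$ in a small neighborhood of $\Diag(\pi)$ all arguments of $\tau$ appearing in either $H_P$ or $H_{P,n}$ stay in a compact subset of $(0,1)$ (using $0<P<1$ and $\pi>0$), so $\tau$ is smooth there. Comparing definitions term by term, $\Delta_n(R) := H_{P,n}(R)-H_P(R)$ is $O(1/n)$ together with its first partial derivatives, uniformly in such $R$. Writing $N := R - \Diag(R^T\1)$, this yields $|\Delta_n(\Diag(R^T\1)) - \Delta_n(R)| = O(\|N\|_1/n)$, which is absorbed into any linear lower bound established for $H_P$ once $n$ is large.

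Now set $f := R^T\1$ and $m := \sum_{a\ne b} R_{ab}$. Then $R = \Diag(f) + N$, where $N$ has zero column sums and $N_{ab} = R_{ab} \ge 0$ for $a\ne b$; in particular $N_{aa} = -\sum_{b\ne a} R_{ba}$, so $\|N\|_1 \le 2m$. Taylor expansion of $H_P$ in $N$ at fixed $f$ gives
\begin{equation*}
H_P(\Diag(f)) - H_P(\Diag(f)+N) = -DH_P(\Diag(f))[N] + O(\|N\|_1^2).
\end{equation*}
Computing $\partial_{R_{ij}} H_P|_{\Diag(f)}$ directly by differentiating $(RPR^T)_{ab}$ and $(R\1)_a(R\1)_b$, and using $N_{jj} = -\sum_{i\ne j} N_{ij}$ to collapse the diagonal contributions, the linear form reduces via the identity $\tau(p)-p\log q - (1-p)\log(1-q) = \kappa(p,q)$ to
\begin{equation*}
-DH_P(\Diag(f))[N] = \sum_{i\ne j} R_{ij} \sum_b f_b\, \kappa(P_{jb}, P_{ib}),
\end{equation*}
where $\kappa(p,q) := p\log(p/q) + (1-p)\log((1-p)/(1-q))$ is the Bernoulli KL divergence.

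By identifiability and symmetry of $P$, for every $i \ne j$ the $i$th and $j$th rows of $P$ differ, so $\kappa(P_{jb}, P_{ib}) > 0$ for at least one $b$; since all $\pi_b > 0$ and the map $f \mapsto \sum_b f_b\, \kappa(P_{jb}, P_{ib})$ is continuous, there exists $c_0 > 0$ with $\sum_b f_b\, \kappa(P_{jb}, P_{ib}) \ge c_0$ uniformly over $i \ne j$ and $f$ in a neighborhood of $\pi$. Combining the steps and using $\|N\|_1 \le 2m$, for sufficiently small $\delta$ and large $n$
\begin{equation*}
H_{P,n}(\Diag(R^T\1)) - H_{P,n}(R) \ge c_0 m - O(m^2) - O(\|N\|_1/n) \ge \tfrac{c_0}{8}\,\|\Diag(R^T\1) - R\|_1,
\end{equation*}
proving \eqref{EqBCLinearAssumption}. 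The main obstacle is scale: a naive Taylor expansion in $W := R - \Diag(\pi)$ has remainder $O(\|W\|_1^2)$, which need not be dominated by $c_0 m$ since $W$ can carry large diagonal perturbations that cancel in the column sums; expanding instead in $N$ (with $\|N\|_1 \le 2m$) forces the remainder to be $O(m^2)$, which is genuinely dominated by $c_0 m$ for small $m$.
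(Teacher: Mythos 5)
Your proof is correct and follows essentially the same route as the paper: both expand to first order in the off-diagonal perturbation $N=R-\Diag(R^T\1)$ at fixed column sums $f$ (the paper's $(f,\l)$-parametrization with $\l_{bb'}=R_{bb'}$ is exactly your $N$), identify the linear coefficient of $R_{ij}$ as $\sum_b f_b K(P_{jb}\|P_{ib})$, bounded away from zero by identifiability, and absorb the higher-order remainder using smoothness of $\tau$ on a compact subset of $(0,1)$. The only difference is cosmetic: you peel off the $O(1/n)$ discrepancy between $H_{P,n}$ and $H_P$ first, whereas the paper carries it through the derivative computation as the $\frac{1}{2n}K(P_{b'b'}\|P_{bb})$ term.
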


\begin{proof}
We can reparametrize the $K\times K$ matrices $R$ by the pairs $(R^T\1,R-\Diag(R^T\1))$,
consisting of the $K$ vector $f=R^T\1$ and the $K\times K$ matrix $R-\Diag(R^T\1)$. The latter matrix
is characterized by having nonnegative off-diagonal elements and zero column sums,
and can be represented in the basis consisting of all $K\times K$ matrices 
$\Delta_{bb'}$, for $b\not=b'$, defined by: $(\Delta_{bb'})_{b'b'}=-1$,
$(\Delta_{bb'})_{bb'}=1$ and $(\Delta_{bb'})_{aa'}=0$, for all other entries $(a,a')$, i.e.\
the $b'$th column of $\Delta_{bb'}$ has a $1$ in the $b$th coordinate and a $-1$ on the $b'$th coordinate
and all its other columns are zero. Given any matrix $R\ge 0$ the matrix $R-\Diag(R^T\1)$ can be decomposed as
$$R-\Diag(R^T\1)=\sum_{b\not= b'}\l_{bb'}\Delta_{bb'},$$
for $\l_{bb'}=R_{bb'}\ge 0$.  Since every $\Delta_{bb'}$ has exactly one nonzero off-diagonal element,
which is equal to 1, and in a different location for each $b\not=b$, the sum of the off-diagonal elements of the matrix on
the right side is $\sum_{b,b'}\l_{bb'}$. Because the sum of all its elements is zero,
it follows that its sum of absolute elements is given by $\|R-\Diag(R^T\1)\|_1=2\sum_{b\not= b'}\l_{bb'}$. 

Thus we obtain a further reparametrization $R\leftrightarrow (f,\l)$, in which
$R=\Diag(f)+\sum_{b\not= b'}\l_{bb'}\Delta_{bb'}$. For  given $P$, $f$ and $n$, define the function
$$G(\l)=H_{P,n}\Bigl(\Diag(f)+\sum_{b\not= b'}\l_{bb'}\Delta_{bb'}\Bigr).$$
Then we would like to show that there exists $C$ such that 
\begin{align*}
\frac{H_{P,n}(\Diag(R^T\1))-H_{P,n}(R)}{\|R-\Diag(R^T\1)\|_1}
=\frac{G(0)-G(\l)}{2\sum_{b\not=b'}\l_{bb'}}
&\ge C>0,
\end{align*}
for every $f$ in a neighbourhood of $\pi$, $\l$ in a neighbourhood of $0$ intersected with $\{\l: \l\ge 0\}$, and
every sufficiently large $n$. 
The numerator in the quotient is $f(0)-f(1)$ for the function $f(s)=G(s\l)$. Writing this difference in the form
$-f'(0)-\int_0^1\bigl(f'(s)-f'(0)\bigr)\,ds$ gives that the numerator is equal to 
\begin{align}
\label{EqExpansionG}
-\nabla G(0)^T\l -\int_0^1 \bigl(\nabla G(s\l)-\nabla G(0))\bigr)^T\,ds\, \l.
\end{align}
It suffices to show that the first term is bounded below by a multiple of $\|\l\|_1$ and
that the second is negligible relative to the first, as $n\ra\infty$,
uniformly in $f$ in a neighbourhood of $\pi$ 
and $\l$ in a neighbourhood of 0 intersected with $\{\l: \l\ge 0\}$. Thus it is sufficient to show first that
for every coordinate $\l_{bb'}$ of $\l$ minus the partial derivative of $G$ at $\l=0$ with respect to $\l_{bb'}$ is bounded away
from 0, as $n\ra\infty$ uniformly in $f$, and second that every partial derivative is
equicontinuous at $\l=0$ uniformly in $f$ and large $n$.

We have 
\begin{equation}
\label{EqDefG}
G(\l)=\frac{1}{2}\sum_{a, a'}f_a(\l)\bigl(f_{a'}(\l)-\d_{aa'}/n\bigr)\, 
\tau\Bigl(\frac{\bigl(R(\l)PR(\l)^T\bigr)_{aa'}-\d_{aa'} e_a(\l)/n}{f_a(\l)\bigl(f_{a'}(\l)-\d_{aa'}/n\bigr)}\Bigr),
\end{equation}
for 
\begin{align*}
f(\l)&=f+\sum_{bb'}\l_{bb'}(\Delta_{bb'}\1),\\
R(\l)&=\Diag(f)+\sum_{b\not= b'}\l_{bb'}\Delta_{bb'},\\
e_a(\l)&=\sum_kP_{kk}R_{ak}(\l)=P_{aa}f_a+\sum_{b\not= b'}P_{b'b'}\l_{bb'}(\d_{ab}-\d_{ab'}) .
\end{align*}
By a lengthy calculation, given in Lemma \ref{lem:lengthycalculation}, 
\begin{equation}
\label{EqDerivativeG}
\frac{\partial}{\partial\l_{bb'}}G(\l)_{|\l=0}=-\sum_a f_{a} K(P_{ab'} \|P_{ab}) +\frac{1}{2n}K(P_{b'b'}\| P_{bb}),
\end{equation}
for $K(p\|q)=p\log (p/q)+(1-p)\log \bigl((1-p)/(1-q)\bigr)$ the Kullback-Leibler divergence between
the Bernoulli distributions with success probabilities $p$ and $q$.
The numbers $f_a$ are bounded away from zero for $f$ sufficiently close to $\pi$, and hence
so is $\sum_a f_{a}  K(P_{ab'} \| P_{ab})$, unless the $b$th and $b'$th column of $P$ are identical.
The whole expression is bounded below by the minimum over $(b,b')$ of these numbers minus 
$(2n)^{-1}$ times the maximum of the numbers $K(P_{b'b'}\| P_{bb})$, and hence is positive and bounded
away from zero for sufficiently large $n$.

To verify the equicontinuity  of the partial derivatives we can compute these explicitly at 
$\l$ and take their limit as $n\ra\infty$.  We omit the details of this calculation.
However, we note that every term of $G(\l)$ is a fixed function of the quadratic forms in $\l$
\begin{align}
&\bigl(f_a+\sum_{bb'}\l_{bb'}(\Delta_{bb'}\1)_a\bigr)\bigl(f_{a'}+\sum_{bb'}\l_{bb'}(\Delta_{bb'}\1)_{a'}-\d_{aa'}/n\bigr),
\label{EqFunctionOne}\\
&\Bigl(\bigl(\Diag(f)+\sum_{b\not= b'}\l_{bb'}\Delta_{bb'}\bigr)P \bigl(\Diag(f)+\sum_{b\not= b'}\l_{bb'}\Delta_{bb'}^T\bigr)\Bigr)_{aa'}\nonumber\\
&\qquad\qquad\qquad\qquad\qquad\qquad-\frac {\d_{aa'}}{2n}\bigl(P_{aa}f_a+\sum_{b\not= b'}P_{b'b'}\l_{bb'}(\d_{ab}-\d_{ab'}) \bigr).
\label{EqFunctionTwo}
\end{align}
These forms are obviously smooth in $\l$, and their dependence and that of their derivatives
on $n$ is seen to vanish as $n\ra\infty$. For $f$ and $\l$ restricted to neighbourhoods of $\pi$ and
0, the values of the quadratic forms are restricted to a domain in which the transformation mapping them
into $G(\l)$ is continuously differentiable. Thus the desired equicontinuity follows by the chain rule.
\end{proof}

\begin{lem}
\label{lem:lengthycalculation}
The partial derivatives of the function $G$ at $0$ defined by \eqref{EqDefG} are given by 
\eqref{EqDerivativeG}.
\end{lem}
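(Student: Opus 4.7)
The plan is to differentiate the expression \eqref{EqDefG} for $G(\l)$ at $\l=0$ by the chain and product rules, exploiting the fact that at $\l=0$ the argument of $\tau$ in every summand collapses to $P_{aa'}$. Writing $G(\l) = \tfrac12\sum_{a,a'} A_{aa'}(\l)\,\tau\bigl(B_{aa'}(\l)/A_{aa'}(\l)\bigr)$ with $A_{aa'}(\l)=f_a(\l)(f_{a'}(\l)-\d_{aa'}/n)$ and $B_{aa'}(\l)=(R(\l)PR(\l)^T)_{aa'}-\d_{aa'}e_a(\l)/n$, a short calculation shows $B_{aa'}(0)/A_{aa'}(0)=P_{aa'}$ (the off-diagonal case is immediate, and for $a=a'$ the factor $f_a(f_a-1/n)$ cancels nicely against $f_a^2P_{aa}-f_aP_{aa}/n=P_{aa}f_a(f_a-1/n)$).

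Once this is established, I would write
\begin{equation*}
\partial_{\l_{bb'}} G\big|_{\l=0} = \tfrac12\sum_{a,a'}\Bigl[\bigl(\tau(P_{aa'})-P_{aa'}\tau'(P_{aa'})\bigr)\,\partial_{\l_{bb'}} A_{aa'}\big|_0 + \tau'(P_{aa'})\,\partial_{\l_{bb'}} B_{aa'}\big|_0\Bigr],
\end{equation*}
and then replace the coefficients using the identities $\tau'(p)=\log\bigl(p/(1-p)\bigr)$ and $\tau(p)-p\,\tau'(p)=\log(1-p)$, which are immediate from $\tau(p)=p\log p+(1-p)\log(1-p)$. Next I would compute the two required partial derivatives from the definitions. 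Using $(\Delta_{bb'}\1)_a=\d_{ab}-\d_{ab'}$ and the symmetry of $P$, one obtains at $\l=0$:
\begin{align*}
\partial_{\l_{bb'}} A_{aa'} &= (\d_{ab}-\d_{ab'})(f_{a'}-\d_{aa'}/n)+f_a(\d_{a'b}-\d_{a'b'}),\\
\partial_{\l_{bb'}} B_{aa'} &= (\d_{ab}-\d_{ab'})P_{b'a'}f_{a'}+(\d_{a'b}-\d_{a'b'})f_a P_{ab'}-\tfrac{\d_{aa'}}{n}P_{b'b'}(\d_{ab}-\d_{ab'}).
\end{align*}

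To finish I would substitute these expressions and perform the sums. The $(a,a')\leftrightarrow(a',a)$ symmetry of the summand (combined with $P^T=P$) means the two Kronecker pairs give identical contributions, collapsing the double sum into a single sum over $a$ and canceling the factor $\tfrac12$. Grouping the resulting four logarithmic terms by $a$ then produces exactly
\begin{equation*}
(1-P_{ab'})\log\frac{1-P_{ab'}}{1-P_{ab}} + P_{ab'}\log\frac{P_{ab'}}{P_{ab}} = K(P_{ab'}\|P_{ab}),
\end{equation*}
times $-f_a$; the $1/n$ contributions combine analogously into $\tfrac{1}{2n}K(P_{b'b'}\|P_{bb})$. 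The main obstacle is purely organizational: keeping careful track of the symmetry factor of $2$ arising from the $a\leftrightarrow a'$ exchange, and isolating the $\d_{aa'}/n$ correction terms so that they aggregate cleanly into a single KL-divergence rather than scattering through the sum. No new analytic input is needed beyond the two identities for $\tau$ and the algebraic recognition of the Kullback--Leibler divergence in the final expression.
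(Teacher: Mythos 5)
Your proposal is correct and follows essentially the same route as the paper: both differentiate each summand $u\,\tau(v/u)$ of \eqref{EqDefG} at $\l=0$, use that the argument of $\tau$ collapses to $P_{aa'}$ there (your coefficients $\log(1-p)$ and $\log(p/(1-p))$ are exactly the paper's $-\log(u/(u-v))$ and $\log(v/(u-v))$), and compute the same derivatives $u'(0),v'(0)$ via $(\Delta_{bb'}\1)_a=\d_{ab}-\d_{ab'}$ and $(\Delta_{bb'}P)_{aa'}=(\d_{ab}-\d_{ab'})P_{b'a'}$. Your write-up actually carries the final symmetrization and regrouping into the Kullback--Leibler form further than the paper, which stops at the substitution step; the details you sketch check out, including the $\d_{aa'}/n$ corrections aggregating to $\tfrac{1}{2n}K(P_{b'b'}\|P_{bb})$.
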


\begin{proof}
For given differentiable functions $u$ and $v$ the map $\eps\mapsto u(\eps)\tau\bigl(v(\eps)/u(\eps)\bigr)$ has derivative
$v'\log \bigl(v/(u-v)\bigr)-u'\log \bigl(u/(u-v)\bigr)$.
We apply this for every given pair $(a,a')$ to the functions $u$ and $v$ obtained by taking $\l_{bb'}$ in 
\eqref{EqFunctionOne} and \eqref{EqFunctionTwo}
equal to $\eps$ and all other coordinates of $\l$ equal to zero. Then 
\begin{align*}
u(0)&=f_a(f_{a'}-\d_{aa'}/n),\\
v(0)&=f_a(f_{a'}-\d_{aa'}/n)P_{aa'},\\
u'(0)&=(\Delta_{bb'}\1)_a(f_{a'}-\d_{aa'}/n)+f_a(\Delta_{bb'}\1)_{a'}\\
v'(0)&=(\Delta_{bb'}P)_{aa'}f_{a'}+f_a(\Delta_{bb'}P)_{a'a}-(\d_{aa'}/n) P_{b'b'}(\d_{ab}-\d_{ab'}).
\end{align*}
It follows that ${v(0)}/{(u(0)-v(0))}={P_{aa'}}/{(1-P_{aa'})}$,
and ${u(0)}/{(u(0)-v(0))}={1}/{(1-P_{aa'})}$. Hence in view of \eqref{eq:HPn}
the partial derivative in \eqref{EqDerivativeG} is equal to 
$$\sum_{a\not=a'}\Bigl[v'(0) \log \frac {P_{aa'}}{1-P_{aa'}}-u'(0)\log \frac {1}{1-P_{aa'}}\Bigr].$$
We combine this with the equalities
$$(\Delta_{bb'}\1)_a=
\begin{cases}
0& \text{ if } a\notin\{b,b'\},\\
-1& \text{ if } a=b',\\
1& \text{ if } a=b,
\end{cases}
\quad
(\Delta_{bb'}P)_{aa'}=
\begin{cases}
0& \text{ if } a\notin\{b,b'\},\\
-P_{b'a'}& \text{ if } a=b',\\
P_{b'a'}& \text{ if } a=b.
\end{cases}$$
\end{proof}

\begin{lem}
\label{LemmaStrongMaximalitySmallRho}
If $S$ is fixed and symmetric, every pair of rows of $S$ is different and $S>0$ and $\pi>0$ coordinatewise, then
there exists $C>0$ such that, for sufficiently small $\delta>0$ and any $\rho_n\downarrow 0$,
\begin{align*}
\liminf_{n\ra\infty}\inf_{0<\|R-\Diag(\pi)\|<\delta}\frac{H_{\rho_n S,n}\bigl(\Diag(R^T\1)\bigr)-H_{\rho_n S,n}(R)}{\rho_n \|\Diag(R^T\1)-R\|}\ge C.
\end{align*}
\end{lem}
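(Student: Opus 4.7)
The plan is to adapt the proof of Lemma~\ref{LemmaStrongMaximality} verbatim, after first extracting the scaling factor $\rho_n$. Using the Taylor expansion $\tau(\rho_n u)=\rho_n u\log\rho_n+\rho_n\tau_0(u)+O(\rho_n^2)$, valid uniformly for $u$ in any bounded set (with $\tau_0(u)=u\log u-u$), one can decompose
\begin{equation*}
H_{\rho_n S,n}(R)=\rho_n(\log\rho_n)\,T_n(R^T\1)+\rho_n\,G_{S,n}(R)+O(\rho_n^2),
\end{equation*}
uniformly for $R$ in a neighbourhood of $\Diag(\pi)$, where $G_{S,n}$ is the analogue of $H_{P,n}$ in \eqref{eq:HPn} obtained by replacing $\tau$ with $\tau_0$ and $P$ with $S$, and $T_n(f)=\tfrac{1}{2}(f^T S f-n^{-1}\sum_k S_{kk} f_k)$ depends only on $R^T\1$. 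Because $\Diag(R^T\1)^T\1=R^T\1$, the $\log\rho_n$ term cancels in the numerator of the quotient. The $O(\rho_n^2)$ remainder is a smooth function of the entries of $R$ on the relevant domain, so by the mean value theorem its difference at $R$ and $\Diag(R^T\1)$ is $O(\rho_n^2\|R-\Diag(R^T\1)\|_1)$. It therefore suffices to show that $G_{S,n}(\Diag(R^T\1))-G_{S,n}(R)\ge C\|R-\Diag(R^T\1)\|_1$ for $R$ near $\Diag(\pi)$ and $n$ large.

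At this point the argument of Lemma~\ref{LemmaStrongMaximality} transfers almost without change to the function $\tilde G(\lambda)=G_{S,n}(R(\lambda))$ under the reparametrization $R=\Diag(f)+\sum_{b\not= b'}\lambda_{bb'}\Delta_{bb'}$. Redoing the derivative computation of Lemma~\ref{lem:lengthycalculation} with $\tau$ replaced by $\tau_0$, and using $\tfrac{d}{d\eps}[u\,\tau_0(v/u)]=v'\log(v/u)-u'v/u$ in place of its $\tau$-counterpart, yields
\begin{equation*}
\frac{\partial}{\partial\lambda_{bb'}}\tilde G(\lambda)_{|\lambda=0}=-\sum_a f_a\,K_0(S_{ab'}\|S_{ab})+\frac{1}{2n}K_0(S_{b'b'}\|S_{bb}),
\end{equation*}
where $K_0(p\|q)=p\log(p/q)-p+q$ is the Kullback--Leibler divergence between two Poisson distributions with rates $p$ and $q$. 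Because $S$ is symmetric with distinct rows and $\pi>0$, for every $b\not= b'$ at least one summand $f_a K_0(S_{ab'}\|S_{ab})$ is strictly positive and bounded below uniformly in $f$ near $\pi$, so for $n$ large this dominates the $(2n)^{-1}$ correction and $-\partial_{\lambda_{bb'}}\tilde G(0)\ge c>0$. Equicontinuity of $\nabla\tilde G$ at $\lambda=0$ follows as in the proof of Lemma~\ref{LemmaStrongMaximality}, since $\tilde G$ is a smooth function of the quadratic forms \eqref{EqFunctionOne}--\eqref{EqFunctionTwo} (with $P$ replaced by $S$) and $\tau_0$ is smooth on any compact sub-interval of $(0,\infty)$, which contains the relevant arguments since $S>0$ and $f$ stays near $\pi>0$.

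Combining these ingredients through the expansion \eqref{EqExpansionG} gives $\tilde G(0)-\tilde G(\lambda)\ge C\|\lambda\|_1$ for $\lambda\ge 0$ in a small neighbourhood of $0$; substituting back and using $\|R-\Diag(R^T\1)\|_1=2\sum_{b\not= b'}\lambda_{bb'}$ produces the lower bound $\rho_n C\|R-\Diag(R^T\1)\|_1$ on the numerator. For $\rho_n$ small enough this dominates the $O(\rho_n^2\|R-\Diag(R^T\1)\|_1)$ remainder, so the quotient is bounded below by $C/2$ eventually, which gives the liminf statement for any $\rho_n\downarrow 0$.

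The principal obstacle I foresee is verifying the \emph{linear} scaling of the $O(\rho_n^2)$ remainder in $\|R-\Diag(R^T\1)\|_1$ in the first step: without this refinement, the remainder would overwhelm the leading $\rho_n$-order bound whenever $\|R-\Diag(R^T\1)\|_1$ is of order $\rho_n$ or smaller, and the quotient in the lemma statement would not remain bounded below. Handling it requires a careful Taylor-remainder bookkeeping that exploits smoothness of the remainder in $R$, rather than merely its scale in $\rho_n$.
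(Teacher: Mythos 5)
Your proof is correct and follows essentially the same route as the paper's: both reduce the problem to the expansion \eqref{EqExpansionG} from Lemma~\ref{LemmaStrongMaximality}, identify the limiting scaled gradient $-\sum_a f_a K_0(S_{ab'}\|S_{ab})$ via the Poisson Kullback--Leibler limit of the Bernoulli divergence, and observe that the $\log\rho_n$ contribution cancels because its coefficient depends only on the column sums $R^T\1$. The only organizational difference is that you extract the factor $\rho_n$ from $H_{\rho_n S,n}$ \emph{before} differentiating (which obliges you to control the $O(\rho_n^2)$ remainder linearly in $\|R-\Diag(R^T\1)\|$, as you correctly note and handle by the mean value theorem), whereas the paper differentiates first and shows that $\rho_n^{-1}\nabla G(0)$ has a negative limit and $\rho_n^{-1}(\nabla G(\l)-\nabla G(0))$ is uniformly small, absorbing that remainder into the gradient estimates.
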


\begin{proof}
In the notation of the proof of Lemma~\ref{LemmaStrongMaximality} we must now show that
$G(0)-G(\l)\ge C \rho_n\|\l\|_1$, as $n\ra\infty$, uniformly in $f$ in a neighbourhood of $\pi$, and $\l$ in a 
positive neighbourhood of $0$. As in that proof we write $G(0)-G(\l)$ in the
form \eqref{EqExpansionG} and see that it suffices that the partial derivatives
of $G$ at 0 divided by $\rho_n$ tend to  negative limits, and that $\bigl\|\nabla G(\l)-\nabla G(0)\bigr\|/\rho_n$
becomes uniformly small as $\l$ is close enough to zero.

The partial derivative at 0 with respect to $\l_{bb'}$ is given in \eqref{EqDerivativeG}, where we must
replace $P$ by $\rho_n S$.
Since the scaled Kullback-Leibler divergence
$\rho_n^{-1}K(\rho_n s \| \rho_n t)$ of two Bernoulli laws converges to the Kullback-Leibler divergence
$K_0(s\|t)=s\log (s/t)+ t-s$  between two Poisson laws of means $s$ and $t$, as $\rho_n\ra 0$,
it follows that for $\rho_n\ra0$, uniformly in $f$,
$$\frac1 {\rho_n}\frac{\partial}{\partial\l_{bb'}} G(\l)_{|\l=0}\ra 
-\sum_a f_a K_0(S_{ab'}\|S_{ab}).$$
The right side is strictly negative by the assumption that every pair of rows of $S$  differ in 
at least one coordinate.

If $P=\rho_n S$, then the function $\l\mapsto v(\l)$ given in \eqref{EqFunctionTwo} takes the form $v=\rho_n v_S$,
for $v_S$ defined in the same way but with $S$ replacing $P$. The function $u$ given in 
\eqref{EqFunctionOne} does not depend on $P$ or $S$. Using again that
the derivative of the map $\eps\mapsto u(\eps)\tau\bigl(v(\eps)/u(\eps)\bigr)$ is given by 
$v'\log \bigl(v/(u-v)\bigr)-u'\log \bigl(u/(u-v)\bigr)$, we see that the partial derivative 
with respect to $\l_{bb'}$ of the $(a,a')$ term in the sum defining $G$ takes the form
$$\rho_n v_S' \log \frac{\rho_n v_S}{u-\rho v_S}- u'\log \frac {u}{u-\rho_n v_S}
=\rho_n v_S'\log \rho_n -\rho_nv_S'\log (v_S/u)-(\rho_n v_S'-u')\log (1-\rho_n v_S/u).$$
Here $u$ and $V_S$ are as in \eqref{EqFunctionOne} and \eqref{EqFunctionTwo} (with $P$ replaced by $S$),
and depend on $(a,a')$.
From the fact that the column sums of the matrices $R(\l)$ do not depend on $\l$,
we have that
$$\sum_{a,a'}\bigl[(R(\l)S R(\l)^T)_{aa'}-\frac{\d_{aa'}}{n}\sum_k P_{kk}R(\l)_{ak}\bigr]
=R(\l)^T\1SR(\l)^T\1-\sum_k P_{kk} \sum_a R(\l)_{ak}$$
is constant in $\l$. This shows that $\sum_{a,a'}v_S'=0$ and hence the contribution
of the term $\rho_n v_S'\log \rho_n$ to the partial derivatives of $G$ vanishes. 
The  term $-(\rho_n v_S'-u')\log (1-\rho_n v_S/u)$ can be expanded
as $(\rho_n v_S'-u')\rho_n v_S/u$ up to  $O(\rho_n^2)$, uniformly in $f$ and $\l$.
Since these are equicontinuous functions of $\l$, it follows that
$\rho_n^{-1} \bigl(\nabla G(\l)-\nabla G(0)\bigr)$ becomes arbitrarily small if $\l$
varies in a sufficiently small neighbourhood of $0$. 
\end{proof}

\begin{lem}
\label{LemmaXmod}
There exists a  constant $c>0$ such that for $X(e)$ as in \eqref{EqDefX}, for every 
twice differentiable functions $t_{a,b}: [0,1]\to \mathbb{R}$ with $\|t_{a,b}'\|_\infty\vee \|t_{a,b}''\|_\infty\le1$,
and every $x>0$, 
\begin{align*}
&\Pr\Bigl(\max_{e: \# (e_i\not=Z_i)\le  m}\bigl\|X(e)-X(Z)\bigr\|_\infty> x\Bigr)
&\le 6 \binom{n}{m}K^{m+2} e^{-\frac{cx^2n^2}{m\|P\|_\infty/n+x}}.
\end{align*}
\end{lem}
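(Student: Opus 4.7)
The plan is to reduce everything to Bernstein's inequality for conditionally independent Bernoulli contributions, applied to a two-term Taylor-type expansion of $X_{ab}(e)-X_{ab}(Z)$, and then to take a union bound over $(a,b)$ and over the labellings $e$ that differ from $Z$ in at most $m$ coordinates.

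First I would fix $(a,b)$ and a labelling $e$ with $\#\{i:e_i\neq Z_i\}\le m$, and introduce $F_{ab}(e)=\wt O_{ab}(e)/n^2$, $G_{ab}(e)=\E(\wt O_{ab}(e)\mid Z)/n^2$, and $D_{ab}(e)=\wt O_{ab}(e)-\E(\wt O_{ab}(e)\mid Z)$. By the fundamental theorem of calculus $X_{ab}(e)=\phi_{ab}(e)\,D_{ab}(e)/n^2$, where
$$\phi_{ab}(e)=\int_0^1 t_{ab}'\bigl((1-s)G_{ab}(e)+sF_{ab}(e)\bigr)\,ds,$$
which yields the key identity
$$X_{ab}(e)-X_{ab}(Z)=\phi_{ab}(e)\,\frac{D_{ab}(e)-D_{ab}(Z)}{n^2}+\bigl(\phi_{ab}(e)-\phi_{ab}(Z)\bigr)\,\frac{D_{ab}(Z)}{n^2}.$$
The bounds $\|t_{ab}'\|_\infty\vee\|t_{ab}''\|_\infty\le 1$ give $|\phi_{ab}(e)|\le 1$ and $|\phi_{ab}(e)-\phi_{ab}(Z)|\le\tfrac12\bigl(|F_{ab}(e)-F_{ab}(Z)|+|G_{ab}(e)-G_{ab}(Z)|\bigr)$. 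Since only edges with at least one endpoint in $I=\{i:e_i\neq Z_i\}$ contribute to these differences (at most $mn$ edges, with coefficients in $[-2,2]$), the deterministic bounds $|F_{ab}(e)-F_{ab}(Z)|\le 2m/n$ and $|G_{ab}(e)-G_{ab}(Z)|\le 2m\|P\|_\infty/n$ follow, so $|\phi_{ab}(e)-\phi_{ab}(Z)|\le 2m/n$.

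On $\{|X_{ab}(e)-X_{ab}(Z)|>x\}$, at least one of the two pieces above exceeds $x/2$. I would bound each conditionally on $Z$ using Bernstein. For the first piece, $D_{ab}(e)-D_{ab}(Z)$ is a centered sum of at most $mn$ conditionally independent terms bounded by $2$ in absolute value with conditional variance at most $4\|P\|_\infty$; at threshold $xn^2/2$, Bernstein produces an exponent $-c\,x^2n^2/(m\|P\|_\infty/n+x)$, precisely the target. For the second piece, $D_{ab}(Z)$ is a centered sum of $\lesssim n^2$ such terms, and Bernstein at threshold $xn^3/(4m)$ gives an exponent $-c(n/m)\,x^2n^2/(m\|P\|_\infty/n+x)$, which, since $m\le n$, is at least as negative as the first, so the corresponding probability is dominated by the first bound. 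A union bound over the $K^2$ pairs $(a,b)$ and, for the first event only, over the $\binom{n}{m}K^m$ labellings $e$ (the second event is $e$-free, so no $e$-union is needed there) yields the claimed prefactor $6\binom{n}{m}K^{m+2}$.

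The main obstacle is controlling the second-order term $(\phi_{ab}(e)-\phi_{ab}(Z))D_{ab}(Z)/n^2$: although $D_{ab}(Z)$ is a full centered sum over $\Theta(n^2)$ edges and its typical size is much larger than that of the incremental sum $D_{ab}(e)-D_{ab}(Z)$, the small Lipschitz factor $|\phi_{ab}(e)-\phi_{ab}(Z)|\le 2m/n$ compensates exactly. The delicate step is the algebraic verification that applying Bernstein to $D_{ab}(Z)$ at the appropriate threshold $xn^3/(4m)$ produces an exponent with an extra factor $n/m\ge 1$ compared with the first exponent, so that the second-order contribution is subdominant; everything else is a routine union bound.
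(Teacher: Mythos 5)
Your proof is correct, and it reaches the stated bound by a slightly different decomposition than the paper's. The paper splits $\wt O_{ab}(e)/n^2=S_1+S_2$ according to whether an edge touches the set $I=\{i:e_i\neq Z_i\}$, and telescopes $t$ through the four points $S_1+S_2$, $\E S_1+S_2$, $\E S_1+\E S_2$ (and the primed analogues for $Z$), producing two first-order terms $|S_1-\E S_1|$, $|S_1'-\E S_1'|$ and a cross term bounded by $|S_2-\E S_2|\,|\E S_1-\E S_1'|$ via a double integral of $t''$. You instead use the exact mean-value representation $X_{ab}(e)=\phi_{ab}(e)D_{ab}(e)/n^2$ and difference the product, which merges the two first-order terms into the single incremental sum $D_{ab}(e)-D_{ab}(Z)$ and replaces the paper's $S_2-\E S_2$ (the $e$-free centered sum over untouched edges) by the $e$-free full sum $D_{ab}(Z)$. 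The two arguments hinge on the same two observations — the incremental sum involves only $O(mn)$ conditionally independent bounded terms, and the large $e$-free sum is multiplied by a deterministic Lipschitz factor $O(m/n)$ so that Bernstein at threshold $xn^3/(4m)$ yields an exponent improved by the factor $n/m\ge 1$ and no union bound over $e$ is needed for that piece — and your bookkeeping ($|\phi_{ab}(e)|\le 1$, $|\phi_{ab}(e)-\phi_{ab}(Z)|\le 2m/n$, the variance and range bounds feeding Bernstein) checks out. Your version is marginally more streamlined in that it needs only two Bernstein applications instead of three; the paper's version keeps the second-order random factor supported on the untouched edges only, which changes nothing in the final estimate.
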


\begin{proof}
Given $Z$ there are at most $\binom{n}{m}$ groups of $m$ candidate nodes that can
be assigned to have $e_i\not= Z_i$, and the label of each node
can be chosen  in at most $K-1$ ways. Thus conditioning
the probability on $Z$, we can use the union bound to pull out the maximum over $e$,
giving a sum of fewer than $\binom{n}{m}K^m$  terms. Next we pull out the norm
giving another factor $K^2$. It suffices to combine this with a tail bound for 
a single variable $X_{a,b}(e)-X_{a,b}(Z)$. Write $t$ for $t_{a,b}$.

Assume for simplicity of notation that $e_i=Z_i$, for $i>m$, and decompose
\begin{align*}
\frac1{n^2}O_{ab}(e)
&=\frac 1{n^2}\Bigl[\sum_{i\le m \text{ or }j\le m}A_{ij}1_{e_i=a,e_j=b}+\sum_{i> m \text{ and }j > m}A_{ij}1_{e_i=a,e_j=b}\Bigr]\\
&=: S_1+S_2.
\end{align*}
Let $O_{ab}(Z)/n^2=: S_1'+S_2$, with the same variable $S_2$, be the corresponding
decomposition if $e$ is changed to $Z$, and then decompose, where the expectation signs
$\E$ denote conditional expectations given $Z$,
\begin{align*}
X_{ab}&(e)-X_{ab}(Z)\\
&=\bigl(t(S_1+S_2)-t(\E S_1+\E S_2)\bigr)-\bigl(t(S_1'+S_2)-t(\E S_1'+\E S_2)\bigr)\\
&=t(S_1+S_2)-t(\E S_1+S_2)\\
&\quad+\bigl(t(\E S_1+S_2)-t(\E S_1+\E S_2)\bigr)-\bigl(t(\E S_1'+S_2)-t(\E S_1'+\E S_2)\bigr)\\
&\quad+t(\E S_1'+S_2)-t(S_1'+ S_2)
\end{align*}
The first and third terms on the far right can be bounded above in absolute value by $\|t'\|_\infty$ times
the increment. To estimate the second term we  write it as
$$(S_2-\E S_2)(\E S_1-\E S_1')\int_0^1\!\int_0^1 t''\bigl(u S_2+(1-u)\E S_2+v\E S_1+(1-v)\E
S_1'\bigr)\,du\,dv.$$
Since the first and second derivatives of $t$ are uniformly bounded by 1, it follows that
$$\bigl|X_{ab}(e)-X_{ab}(Z)\bigr|\le |S_1-\E S_1|+ |S_2-\E S_2|\,|\E S_1-\E S_1'|+ |S_1'-\E S_1'|.$$
The variable $S_1-\E S_1$ is a sum of fewer than  $2mn$ independent variables, each with
conditional mean zero, bounded above by $1/n^2$ and of variance bounded above
by $\|P\|_\infty/n^4$. Therefore Bernstein's inequality gives that
$$\mathbb{P}\bigl( |S_1-\E S_1|>x\bigr)\le e^{-\tfrac{1}{2} x^2/(2mn\|P\|_\infty/n^4+x/(3n^2))}.$$
This is as the exponential factor in the bound given by the lemma, for appropriate $c$.
The variable $S_1'-\E S_1'$ can be bounded similarly. Furthermore
$|\E S_1-\E S_1'|\le 4mn/n^2=4m/n$, and $S_2-\E S_2$ is the sum of fewer than $n^2$ variables
as before, so that 
$$\mathbb{P}\bigl( |S_2-\E S_2|\,|\E S_1-\E S_1'|>x\bigr)\le e^{-\tfrac{1}{2} (xn/(4m))^2/(n^2\|P\|_\infty/n^4+xn/(12mn^2))}.$$
The exponent has a similar form as before, except for an additional factor $n/m\ge 1$.
\end{proof}

\bibliographystyle{imsart-nameyear}
\bibliography{bayesiansbm}

\end{document}